\newcommand{\rd}{\mathbb{R}^{d}}
\newcommand{\R}{\mathbb{R}}
\newcommand{\C}{\mathbb{C}}
\newcommand{\hA}{\widehat{A}}
\newcommand{\AhA}{A\times\widehat{A}}
\let\originalleft\left
\let\originalright\right
\renewcommand{\left}{\mathopen{}\mathclose\bgroup\originalleft}
\renewcommand{\right}{\aftergroup\egroup\originalright}
\newtheorem{theorem}{Theorem}[section]
\newtheorem{proposition}[theorem]{Proposition}
\newtheorem{remark}[theorem]{Remark}
\newtheorem{definition}[theorem]{Definition}
\newtheorem{corollary}[theorem]{Corollary}
\newtheorem{example}[theorem]{Example}
\numberwithin{equation}{section}
\title[Maximally localized  Gabor orthonormal bases]{Maximally localized Gabor orthonormal bases on locally compact Abelian groups}
\author[F. Nicola]{Fabio Nicola}
\address{Dipartimento di Scienze Matematiche, Politecnico di Torino, Corso Duca degli Abruzzi 24, 10129 Torino, Italy}
\email{fabio.nicola@polito.it}
\date{}
\begin{document} 

\begin{abstract}
A Gabor orthonormal basis, on a locally compact Abelian (LCA) group $A$,  is an orthonormal basis of $L^2(A)$ which consists of time-frequency shifts of some template $f\in L^2(A)$. It is well-known that, on $\rd$, the elements of such a basis cannot have a good time-frequency localization. The picture is drastically different on LCA groups containing a compact open subgroup, where one can easily construct examples of Gabor orthonormal bases with $f$ maximally localized, in the sense that the ambiguity function of $f$ (i.e. the correlation of $f$ with its time-frequency shifts) has support of minimum measure, compatibly with the uncertainty principle. In this paper we find {\it all} the Gabor orthonormal bases with this extremal property. To this end, we identify all the functions in $L^2(A)$ which are maximally localized in the time-frequency space in the above sense -- an issue which is open even for finite Abelian groups. As a byproduct, on every LCA group containing a compact open subgroup, we exhibit the complete family of optimizers for Lieb's uncertainty inequality, and we also show previously unknown optimizers on a general LCA group. 
\end{abstract}

\subjclass[2010]{42B10, 43A75, 49Q10, 81S30, 94A12}
\keywords{Gabor orthonormal bases, Lieb's uncertainty inequality, short-time Fourier transform, locally compact Abelian groups, extremal functions, time-frequency localization}

\maketitle

\section{Introduction}

Let $A$ be a locally compact Abelian (LCA) group, and denote by $\hA$ its dual group. We will write $\langle x, \xi\rangle$, with $x\in A$ and $\xi\in \hA$, for the corresponding duality; hence $|\langle x, \xi\rangle|=1$. 
For $x\in A$, $\xi\in\hA$ we define the translation and modulation operators $T_x$ and $M_\xi$ on $L^2(A)$, and the corresponding time-frequency shifts $\pi(x,\xi)$ as 
\begin{equation}\label{eq transl}
T_x f(y)=f(y-x),\ \  M_\xi f(y)=\langle y,\xi\rangle f(y),\ \  \pi(x,\xi) f(y)= M_\xi T_x f(y),
\end{equation}
where $y\in A$. 

A Gabor orthonormal basis on $A$ is an orthonormal basis of $L^2(A)$ of the type 
\begin{equation}\label{eq ort}
\mathcal{G}(f,\Gamma):=\{\pi(z) f:\ z\in\Gamma\}
\end{equation}
where $f\in L^2(A)$ and $\Gamma$ is a (not necessarily countable) subset of $\AhA$. The function $f$ is called {\it window}. In other terms, a Gabor orthonormal basis is an orthonormal basis in an orbit of the Schr\"odinger representation of the Heisenberg group associated to $A$.  

There is a considerable amount of work on the construction of Gabor orthonormal bases on $\R$; see \cite{agora,casazza99,dutkay14,gabardo,lai,li05,liu01,liu03,pinos} and also \cite{grochenig18} for far reaching generalizations on nilpotent Lie groups. The focus is mainly on windows that are characteristic functions of some compact subset, hence with a poor frequency localization. In fact, the celebrated Balian-Low theorem states, roughly speaking, that there are no Gabor orthonormal bases on $\R^d$, generated by a window with a good time-frequency localization (see e.g. the survey \cite{benedetto95}), and similar obstructions also apply on LCA groups which have no compact open subgroups \cite{enstad,kaniuth}; see also \cite{grochenig13} for the issue of the time-frequency localization of Riesz bases in $\R^d$. 

The situation is drastically different for LCA groups containing a compact, open subgroup. Indeed, if $H\subset A$ is such a subgroup, the function $f= |H|^{-1/2} \chi_H$ (where $|H|$ stands for the measure of $H$) generates an orthonormal basis as in \eqref{eq ort} if $\Gamma$ is any set of representatives of the cosets of $H\times H^\bot$ in $\AhA$ (namely, $\Gamma$  contains exactly one element of each coset) \cite{grochenig98,grochenig07}. This window $f$ is, in a sense, maximally localized in the time-frequency space. To explain this latter point properly, we need some terminology. 

For $f,g\in L^2(A)$, the short-time Fourier transform of $f$ with window $g$ is the complex-valued function on $\AhA$ given by
\begin{equation}\label{eq vgf}
V_g f(x,\xi)=\langle f, \pi(x,\xi)g \rangle_{L^2(A)}\qquad (x,\xi)\in\AhA.
\end{equation}

It is known that, if $\|g\|_{L^2(A)}=1$, then $V_g:L^2(A)\to L^2(\AhA)$ is an isometry, so that the quantity $|V_g f|^2$ can be regarded as a time-frequency energy density of $f$ (see \cite{grochenig98,grochenig_book,lieb_book}). More generally, the time-frequency localization of a function $f\in L^2(A)$ can be measured in terms of the $L^p$-norm of $V_f f$ -- the so-called {\it ambiguity function} -- and corresponding versions of the uncertainty principle, such as Lieb's uncertainty inequality, can be stated (see \cite{grochenig98,lieb} and Section \ref{sec conc} below). We recall, in particular, the following elementary result (see \cite{galbis,grochenig98,krahmer}, Theorem \ref{thm 2.2} below, and also \cite{bonami_demange_jaming,groechenig03} for other formulations of the uncertainty principle in terms of the short-time Fourier transform), which corresponds to a limiting case of an $L^p$-estimate as $p\to 0$ (cf. Remark \ref{rem pto0} below):
 
{\it 
 Let $f\in L^2(A)\setminus\{0\}$. Then } 
 \begin{equation}\label{eq uncert}
|\{z\in\AhA: V_f f(z)\not=0\}|\geq 1.
\end{equation}

Here we used the notation $|S|$ for the measure of a set $S\subset \AhA$, where the Haar measures on $A$ and $\hA$ are chosen so that the Plancherel formula holds true. The inequality \eqref{eq uncert} can be regarded as a time-frequency version of a lower bound, first proved by Matolcsi and Sz\H{u}cs in \cite{matolcsi73} (see also \cite{bonami,donoho, smith, tao,wigderson}) and usually referred to as the Donoho-Stark uncertainty principle, for the product of the measures of the supports of $f$ and $\widehat{f}$. 

The inequality \eqref{eq uncert}  is sharp on every LCA group containing a compact, open subgroup $H$. Indeed, for the function $f=|H|^{-1/2}\chi_H$ considered above, we have $V_f f=\chi_{H\times H^\bot}$ (cf. \cite{grochenig98} and Proposition \ref{pro 4} below) and therefore $f$ is maximally localized in the time-frequency space, in the sense that the inequality \eqref{eq uncert} is saturated ($|H\times H^\bot|=1$ by the Plancherel formula).
This is a desirable property that guarantees that the ``analysis operator" associated to the corresponding basis \eqref{eq ort}, that is $L^2(A)\ni h\mapsto V_f h(z)$, with $z\in \Gamma$, is able to resolve the blobs of energy of $h$ in the time-frequency space, with the highest possible resolution -- at least in the measure theoretic sense. 

This discussion raises the problem of identifying {\it all} the Gabor orthonormal bases generated by a function $f$ maximally localized in the above sense, namely such that the set where $V_f f\not=0$ has measure $1$ (this implies that the same extremal property holds for every element of the basis). This issue is also motivated, on one hand, by the recent advances \cite{enstad} on the Balian-Low theorem on general (second countable) LCA groups which have {\it no} compact open subgroups and, on the other hand, by the recent, considerable interest in optimizers of uncertainty inequalities on Euclidean spaces and Riemannian manifolds \cite{abreu,dias22,frank22,kalaj,knutsen23,kulikov,kulikov22bis,lieb21,romero,nicola22,joao_tilli}. In a sense, the results in this paper can be regarded as complementary to the no-go results in \cite{enstad}.

The main problem therefore lies in the identification of all the functions $f\in L^2(A)$ for which equality occurs in \eqref{eq uncert}, which is an open issue even for finite Abelian groups. This should not come as a surprise, in light of other extremal problems, e.g. for the Young and Hausdorff-Young inequalities, which are notoriously difficult even on LCA groups containing a compact, open subgroup (see \cite[Theorem 3]{fournier77} and \cite[Section 43]{hewitt70}). Indeed, for the finite cyclic group $\mathbb{Z}_N$ all the optimizers for the inequality \eqref{eq uncert} were obtained only recently, in \cite{nicola23}; see also \cite{galbis} for a particular case.  

Our first result (Theorem \ref{mainthm 1}) provides the complete answer to this problem and can be summarized as follows:

{\it Equality occurs in \eqref{eq uncert} if and only if $f=c T_{x_0} h$, for some $c\in\mathbb{C}\setminus\{0\}$, $x_0\in A$ and some subcharacter of second degree $h$ of $A$. In this case,  $\|f\|_{L^2(A)}^{-2}{V_f f}$ is a subcharacter of second degree of $\AhA$, and its support is a maximal compact open isotropic subgroup of $\AhA$.
}
 
A subcharacter of second degree of $A$ is a continuous function $h:A\to\mathbb{C}$ such that, for some compact open subgroup $H\subset A$, the restriction of $h$ to $H$ is a character of second degree of $H$ in the sense of Weil \cite{weil64} (see Section \ref{sec char} below) and $h(x)=0$ for $x\in 
A\setminus H$. The term ``isotropic" refers to the standard symplectic structure of $\AhA$ (see Section \ref{sec sec3}). Hence, maximal compact open isotropic subgroups of $\AhA$ are the minimum uncertainty phase-space cells -- the so-called ``quantum blobs" -- and play the role of the symplectic images of the unit ball (or box) in $\R^d\times\R^d$ \cite{degosson09,fefferman}. Also, notice the formal analogy with the extremal problem for the Hausdorff-Young inequality (on LCA groups), where the optimizers are the constant multiples  of translates of subcharacters \cite[Theorem 43.13]{hewitt70}. 

The study of the cases of equality in \eqref{eq uncert} on $\mathbb{Z}_N$ \cite{nicola23} relies on the explicit description of the subgroups of order $N$ of  $\mathbb{Z}_N\times \mathbb{Z}_N$ and the equally explicit construction of ``finite chirps", which is available in that case (see \cite{casazza06,feichtinger2}). In the present generality we have to follow a more conceptual pattern. To give a flavour of the argument let us briefly outline how the above mentioned subcharacter of second degree arises. By using the covariance property for the short-time Fourier transform we will show that, if the set $G=\{z\in \AhA: V_f f(z)
\not=0\}$ has measure $1$, then $G$ is a maximal compact open {\it isotropic} subgroup of $\AhA$. The projection on the first factor allows us to regard $G$ as an extension of a compact open subgroup $H$ of $A$ by $H^\bot\subset \hA$. This induces, in a natural way, a continuous {\it symmetric} homomorphism $H\to\widehat{H}$, to which we can associate the desired character $h$ of second degree of $H$ -- here we use an extension theorem for characters of second degree from \cite{igusa68}. The analogous problem for the short-time Fourier transform $V_g f$ is addressed in Theorem \ref{mainthm 2}. 

Let us now come back to the above problem of maximally localized Gabor orthonormal bases. We anticipate here the main result (Corollary \ref{mainthm 4}), which provides the desired, full characterization. 

{\it Let $f\in L^2(A)$, $\|f\|_{L^2(A)}=1$ and $G:=\{z\in\AhA: V_f f(z)\not=0\}$. Let $\Gamma\subset\AhA$. 
 Then $\mathcal{G}(f,\Gamma)$ (cf. \eqref{eq ort}) is an orthonormal basis of $L^2(A)$ and $|G|=1$ if and only if $f=cT_{x_0} h$ for some $c\in\mathbb{C}$, $x_0\in A$ and some subcharacter $h$ of second degree of $A$, and $\Gamma$ is a set of representatives of the cosets of $G$ in $\AhA$.
}

The subsets of $\AhA$ defined by $V_f(\pi(z) f)\not=0$, with $z\in\Gamma$, are precisely the cosets of the maximal compact open isotropic subgroup $G$, and define therefore a tiling of $\AhA$. In fact, it turns out that for every tiling of this type there is a corresponding Gabor orthonormal basis (see Remark \ref{rem 6.3}). But the main point of the above result is clearly represented by the necessary condition. We refer the reader to \cite{iosevich,zhou} and the references therein for other constructions of (not necessarily maximally localized) Gabor orthonormal bases in the setting of finite Abelian groups and to \cite{enstad} for general LCA groups.  

In Section \ref{sec conc} we will show  that, on any LCA group containing a compact open subgroup, every extremal function for the uncertainty inequality \eqref{eq uncert} is also an extremal function for Lieb's uncertainty inequality \cite{grochenig98,lieb} and vice versa. This will also provide, as a byproduct, previously unknown optimizers for Lieb's inequality on every LCA group, because such a group is topologically isomorphic to $\R^d\times A_0$ for some integer $d\geq 0$ and some LCA group $A_0$ containing a compact open subgroup. We postpone to a subsequent work, in preparation, the problem of identifying {\it all} the optimizers for Lieb's uncertainty inequality on a general LCA group. We also refer the reader to \cite{frank22, kulikov22bis} and the references therein for recent advances on Lieb's inequality on Euclidean spaces and Riemannian manifolds. 


\section{Notation and preliminary results}\label{sec prel}
\subsection{Notation}
We use the notation introduced at the beginning of the introduction. Hence $A$ denotes a locally compact Abelian group, and $\hA$ its dual group. The pairing $\AhA\to\mathbb{T}$ (the multiplicative group of complex numbers of modulus $1$) given by $(x,\xi)\to \langle x,\xi\rangle$ is therefore well defined. We will write the group laws in $A$ and $\hA$ additively, hence $\langle x+y,\xi\rangle=\langle x,\xi\rangle \langle y,\xi\rangle$ and $\langle x,\xi+\eta\rangle =\langle x,\xi\rangle \langle x,\eta\rangle$. The groups $A,\hA$  are equipped with Haar measures related so that the Plancherel formula holds true. The Haar measure on $\AhA$ is given by the product measure. The inner product in $L^2(A)$ is denoted by $\langle\cdot,\cdot\rangle_{L^2(A)}$. We denote by $|S|$ the measure of a subset $S$ (of $A$ or $\hA$, or $\AhA$), and by $\chi_S$ its characteristic function. 

We refer to  \eqref{eq transl} for the definition of the translation operators $T_x$, $x\in A$, the modulation operators $M_\xi$, $\xi\in\hA$, and the phase-space shifts $\pi(x,\xi)= M_\xi T_x$. They are unitary operators on $L^2(A)$. The short-time Fourier transform $V_g f$, for $f,g\in\ L^2(A)$, was defined in \eqref{eq vgf}; more explicitly 
\begin{align}\label{eq vgf2}
V_g f(x,\xi)&=\langle f, \pi(x,\xi)g \rangle_{L^2(A)}\\
&=\int_A \overline{\langle y,\xi\rangle} f(y) \overline{g(y -x)}\, dy\nonumber\qquad x\in A, \ \xi\in\hA.
\end{align}

\subsection{Preliminaries from time-frequency analysis} 
We recall some basic results about time-frequency analysis on LCA groups. We refer the reader to \cite{feichtinger,grochenig98} for details (see also \cite{grochenig_book} for the analogous results in $\R^d$). 

The following result generalizes a well-known property of the short-time Fourier transform in $\R^d$ \cite{grochenig_book}. 
\begin{proposition}\label{pro 2.1}
Let $f,g\in L^2(A)$.
Then $V_g f$ is a continuous function on $\AhA$, which vanishes at infinity.
\end{proposition}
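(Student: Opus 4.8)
The plan is to establish, in order, a uniform bound, continuity, and decay at infinity, the last step by a density argument that runs parallel to the Euclidean case in \cite{grochenig_book}. First I would record the elementary estimate $\|V_g f\|_{L^\infty(\AhA)}\le \|f\|_{L^2(A)}\|g\|_{L^2(A)}$, which follows at once from \eqref{eq vgf}, the Cauchy--Schwarz inequality, and the fact that $\pi(x,\xi)$ is unitary on $L^2(A)$. Next, for continuity, the key point is the strong continuity of the map $\AhA\ni(x,\xi)\mapsto \pi(x,\xi)g\in L^2(A)$ for fixed $g$. Writing $\pi(x,\xi)g-\pi(x_0,\xi_0)g = M_\xi(T_x g-T_{x_0}g)+M_{\xi_0}(M_{\xi-\xi_0}(T_{x_0}g)-T_{x_0}g)$ and using that $M_\xi$ and $M_{\xi_0}$ are unitary, one gets
\[
\|\pi(x,\xi)g-\pi(x_0,\xi_0)g\|_{L^2(A)}\le \|T_x g-T_{x_0}g\|_{L^2(A)}+\|M_{\xi-\xi_0}(T_{x_0}g)-T_{x_0}g\|_{L^2(A)};
\]
the first term vanishes as $x\to x_0$ by continuity of translation in $L^2(A)$, and the second vanishes as $\xi\to\xi_0$ by dominated convergence, since the corresponding integrand $|\langle y,\xi-\xi_0\rangle-1|^2|g(y-x_0)|^2$ is dominated by $4|g(y-x_0)|^2$ and tends to $0$ pointwise. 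Since $V_g f(x,\xi)=\langle f,\pi(x,\xi)g\rangle_{L^2(A)}$, continuity of $V_g f$ on $\AhA$ then follows from Cauchy--Schwarz.

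For the vanishing at infinity I would argue by density. By the two steps above, $(f,g)\mapsto V_g f$ is a bounded bilinear map from $L^2(A)\times L^2(A)$ into the Banach space $(C_b(\AhA),\|\cdot\|_{L^\infty})$ of bounded continuous functions, and $C_0(\AhA)$ is a closed subspace of it; since $C_c(A)$ is dense in $L^2(A)$, it therefore suffices to prove $V_g f\in C_0(\AhA)$ when $f,g\in C_c(A)$. So let $f,g\in C_c(A)$. From \eqref{eq vgf2} the integrand defining $V_g f(x,\xi)$ is supported where $y\in\operatorname{supp} f$ and $y-x\in\operatorname{supp} g$, hence $V_g f$ vanishes off the compact set $C:=\operatorname{supp} f-\operatorname{supp} g$ in the first variable. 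Moreover, for fixed $x$ we have $V_g f(x,\xi)=\widehat{h_x}(\xi)$, the Fourier transform of $h_x:=f\cdot\overline{T_x g}\in C_c(A)\subset L^1(A)$, so $V_g f(x,\cdot)\in C_0(\hA)$ by the Riemann--Lebesgue lemma; and $x\mapsto h_x$ is continuous from $A$ into $L^1(A)$, since $\|h_x-h_{x'}\|_{L^1(A)}\le \|f\|_{L^2(A)}\|T_x g-T_{x'}g\|_{L^2(A)}\to 0$ as $x'\to x$. Composing with the contraction $L^1(A)\to C_0(\hA)$, $h\mapsto\widehat h$, we deduce that $x\mapsto V_g f(x,\cdot)$ is continuous from $A$ into $C_0(\hA)$, so its restriction to $C$ has compact image in $C_0(\hA)$. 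Since a compact subset of $C_0(\hA)$ is uniformly small outside a suitable compact subset of $\hA$, this gives $\sup_{x\in C}|V_g f(x,\xi)|\to 0$ as $\xi\to\infty$, which together with the vanishing of $V_g f$ off $C\times\hA$ yields $V_g f\in C_0(\AhA)$.

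I expect the only genuinely delicate point to be this last one: the density reduction together with the Riemann--Lebesgue lemma only gives decay in each variable separately, and the actual content is to upgrade this to \emph{joint} decay on $\AhA$. This is exactly what the continuity of $x\mapsto V_g f(x,\cdot)$ as a map into $C_0(\hA)$, combined with the compactness of $C$, supplies. The remaining ingredients -- the $L^\infty$ bound, the strong continuity of the time-frequency shifts, and the density of $C_c(A)$ in $L^2(A)$ -- are standard facts of abstract harmonic analysis.
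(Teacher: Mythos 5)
Your proof is correct, but for the decay at infinity you take a genuinely different route from the paper. The paper's argument is a one-liner: from \eqref{eq vgf2} one has the two pointwise dominations $|V_g f(x,\xi)|\le (|f|\ast|\tilde g|)(x)$ and $|V_g f(x,\xi)|\le (|\widehat f|\ast|\tilde{\widehat g}|)(\xi)$ with $\tilde g(y)=g(-y)$, and since $L^2\ast L^2\subset C_0$, each majorant is small outside a compact set in its own variable, whence $|V_g f|<\varepsilon$ off a product of compacts --- joint decay comes for free from taking the minimum of the two bounds. Your density reduction to $f,g\in C_c(A)$, followed by Riemann--Lebesgue and the observation that the continuous image of the compact set $\operatorname{supp}f-\operatorname{supp}g$ in $C_0(\widehat A)$ is uniformly small outside a common compact set, is a perfectly valid (and correctly executed) alternative; you rightly identify the upgrade from separate to joint decay as the crux, it is just that the paper's convolution bounds dispose of it more economically, while your argument has the mild advantage of not invoking the Fourier transform of $f$ and $g$ at the $L^2$ level. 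For the continuity step the two proofs coincide in substance: the paper simply cites strong continuity of the representations $x\mapsto T_x$ and $\xi\mapsto M_\xi$, which you prove. One small caveat there: on a general LCA group $\widehat A$ need not be metrizable, so "dominated convergence as $\xi\to\xi_0$" is a statement about nets; it is safer to either approximate $T_{x_0}g$ by a compactly supported function (on whose support $\langle y,\xi-\xi_0\rangle\to1$ uniformly) or to note that $\|M_\eta h-h\|_{L^2(A)}=\|T_\eta\widehat h-\widehat h\|_{L^2(\widehat A)}$ and invoke continuity of translation. This is a repair of a sentence, not a gap in the argument.
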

\begin{proof}
Since the unitary representations $A\ni x\mapsto T_x$ and $\hA\ni\xi\mapsto M_\xi$ are strongly continuous on $L^2(A)$, $V_g f$ is a continuous function on $\AhA$. Moreover, by the definition of $V_g f$ in \eqref{eq vgf2} we have $|V_g f(x,\xi)|\leq |f|\ast |\tilde{g}|(x)$ and similarly,  $|V_g f(x,\xi)|\leq |\widehat{f}|\ast |\tilde{\widehat{g}}|(\xi)$, with $\tilde{g}(y):=g(-y)$. On the other hand, functions in $L^2\ast L^2$ tend to zero at infinity.
\end{proof}
As a consequence, we also see that the set $\{z\in \AhA:\ V_g f(z)\not=0\}$ is $\sigma$-compact. 

We will need a few elementary formulas concerning time-frequency shifts and the short-time Fourier transform. 

First, for $x,y\in A$, $\xi,\eta\in \hA$ we have the commutation relations
\begin{equation}\label{eq 2.2}
\pi(x,\xi)\pi(y,\eta)=\langle y,\xi\rangle\overline{\langle x,\eta\rangle}\pi(y,\eta)\pi(x,\xi).
\end{equation}
As a consequence, 
the following covariance-type properties hold true, for $x,y\in A$, $\xi,\eta\in\hA$:  
\begin{equation}\label{eq 2.3}
V_{g}(\pi(x,\xi)f)(y,\eta)= \langle x,\xi\rangle\overline{\langle x,\eta\rangle} V_g f(y-x,\eta-\xi)
\end{equation}
and
\begin{equation}\label{eq 2.4}
V_{\pi(x,\xi)g}(\pi(x,\xi)f)(y,\eta)= \langle y,\xi\rangle \overline{\langle x,\eta\rangle}  V_g f(y,\eta).
\end{equation}

An application of the Cauchy-Schwarz inequality gives at once the following pointwise estimate:
\begin{equation}\label{eq cs}
|V_g f(x,\xi)|\leq \|f\|_{L^2(A)} \|g\|_{L^2(A)}\qquad x\in A,\ \xi\in\hA.
\end{equation}
We finally recall the
 Parseval equality
\begin{equation}\label{eq energy}
\|V_g f\|_{L^2(\AhA)}=\|f\|_{L^2(A)} \|g\|_{L^2(A)}.
\end{equation}
The following uncertainty inequality was first proved in \cite{krahmer} in the case of finite Abelian groups; see also \cite{galbis,grochenig98,nicola23}.

\begin{theorem}\label{thm 2.2} Let $f,g\in L^2(A)\setminus\{0\}$ and $S=\{z\in\AhA: V_g f(z)\not=0\}$. We have $|S|\geq 1$. If $|S|=1$ then $|V_g f|=c \chi_S$, with $c=\|f\|_{L^2(A)}\|g\|_{L^2(A)}$, and therefore $S$ is compact and open. 
\end{theorem}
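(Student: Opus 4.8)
The plan is to prove the two claims of Theorem \ref{thm 2.2} in sequence, using the Parseval identity \eqref{eq energy} together with the pointwise bound \eqref{eq cs}. First I would normalize: assume without loss of generality $\|f\|_{L^2(A)}=\|g\|_{L^2(A)}=1$, so that $V_g f$ is a unit vector in $L^2(\AhA)$ by \eqref{eq energy}, and $|V_g f(z)|\le 1$ pointwise by \eqref{eq cs}. Since $S=\{z:V_g f(z)\neq 0\}$ is $\sigma$-compact (as noted after Proposition \ref{pro 2.1}) and $V_g f$ is continuous, $S$ is open; it is a priori of possibly infinite measure, but the bound $1=\|V_g f\|^2_{L^2}=\int_S |V_g f|^2\le \int_S 1\,dz=|S|$ forces $|S|\ge 1$, which is the first assertion.

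For the equality case, suppose $|S|=1$. Then the inequality $\int_S |V_g f|^2\le |S|$ is in fact an equality, i.e. $\int_S (1-|V_g f|^2)\,dz=0$. Since the integrand $1-|V_g f|^2$ is continuous and nonnegative on $S$, it must vanish identically on $S$, so $|V_g f(z)|=1$ for all $z\in S$; combined with $V_g f=0$ off $S$ this gives $|V_g f|=\chi_S$ everywhere, which after undoing the normalization reads $|V_g f|=c\,\chi_S$ with $c=\|f\|_{L^2(A)}\|g\|_{L^2(A)}$. Finally, $S$ is open as observed above, and it is closed because $|V_g f|=\chi_S$ is continuous on all of $\AhA$ (a continuous function can't equal the characteristic function of a set that isn't clopen); alternatively $S$ is closed because $V_g f$ vanishes at infinity by Proposition \ref{pro 2.1}, so $S$ cannot escape to infinity, and being $\sigma$-compact of finite measure and clopen it is compact. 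Hence $S$ is compact and open.

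The only mildly delicate point is the measure-theoretic step: deducing from $\int_S(1-|V_g f|^2)=0$ that $|V_g f|\equiv 1$ on $S$. This is immediate once one knows $S$ has finite measure and $1-|V_g f|^2$ is continuous and $\ge 0$ there — continuity upgrades "a.e." to "everywhere" on the open set $S$. So I expect no real obstacle; the argument is essentially the observation that a unit vector supported on a set of measure $1$ with sup-norm at most $1$ must be of modulus exactly $1$ on its support. I would present it in three short paragraphs: the normalization and the bound $|S|\ge1$; the equality-forcing argument giving $|V_g f|=c\chi_S$; and the clopen-plus-finite-measure conclusion that $S$ is compact open.
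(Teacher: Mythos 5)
Your proof is correct and follows essentially the same route as the paper: normalize, combine the Parseval identity \eqref{eq energy} with the pointwise bound \eqref{eq cs} to get $|S|\ge 1$, and in the equality case use continuity to upgrade $|V_gf|=1$ a.e.\ on the open set $S$ to everywhere, so that $|V_gf|=\chi_S$ forces $S$ to be clopen and, by the vanishing of $V_gf$ at infinity, compact. No issues.
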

\begin{proof}
We can suppose $\|f\|_{L^2(A)}=\|g\|_{L^2(A)}=1$.
    By \eqref{eq energy} and \eqref{eq cs}, we have 
    \[
1= \int_{S} |V_g f(x,\xi)|^2 dx\, d\xi  \leq  \int_S dx\, d\xi =|S|.
    \]
    If equality occurs in the above inequality, then $|V_g f(z)|=1$ for almost every $z\in S$, and therefore for every $z\in S$, since $V_g f$ is continuous (Proposition \ref{pro 2.1}), and $S$ is open. Hence  $|V_g f|=\chi_S$, which implies that  $S$ is also closed. Moreover, since  $V_g f$ tends to zero at infinity, $S$ is contained in a compact subset, and therefore is compact. 
\end{proof}

We will also need the following uniqueness result for the ambiguity function $V_f f$. 
\begin{proposition}\label{pro 2.3}
Let $f,g\in L^2(A)$. Then $V_f f=V_g g$ on $\AhA$ if and only if  $f=c g$ for some $c\in\C$, $|c|=1$.
\end{proposition}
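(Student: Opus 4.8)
The plan is to derive everything from Moyal's orthogonality relations, which follow from the Parseval identity \eqref{eq energy} by polarization. The implication ``$\Leftarrow$'' is immediate: if $f=cg$ with $|c|=1$, then, since $\pi(z)$ is linear, $V_f f(z)=\langle cg,\pi(z)(cg)\rangle_{L^2(A)}=|c|^2 V_g g(z)=V_g g(z)$ for every $z\in\AhA$. So the content is in the converse.

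Assume $V_f f=V_g g$ on $\AhA$. Evaluating at the origin gives $\|f\|_{L^2(A)}^2=V_f f(0,0)=V_g g(0,0)=\|g\|_{L^2(A)}^2$. If this common value is $0$ then $f=g=0$ and the claim holds with $c=1$; otherwise, after rescaling, we may assume $\|f\|_{L^2(A)}=\|g\|_{L^2(A)}=1$. Next I would record Moyal's formula
\[
\langle V_{g_1}f_1,V_{g_2}f_2\rangle_{L^2(\AhA)}=\langle f_1,f_2\rangle_{L^2(A)}\,\overline{\langle g_1,g_2\rangle_{L^2(A)}},\qquad f_1,f_2,g_1,g_2\in L^2(A),
\]
obtained by polarizing \eqref{eq energy} first in the analysed function and then in the window (keeping in mind that $V_g f$ is linear in $f$ and conjugate-linear in $g$). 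Applied with $f_1=g_1=f$ and $f_2=g_2=g$, this yields $\langle V_f f,V_g g\rangle_{L^2(\AhA)}=|\langle f,g\rangle_{L^2(A)}|^2$.

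To conclude, note that the hypothesis $V_f f=V_g g$ forces $\langle V_f f,V_g g\rangle_{L^2(\AhA)}=\|V_f f\|_{L^2(\AhA)}^2$, which by \eqref{eq energy} equals $\|f\|_{L^2(A)}^2\|g\|_{L^2(A)}^2=1$. Hence $|\langle f,g\rangle_{L^2(A)}|=1=\|f\|_{L^2(A)}\|g\|_{L^2(A)}$, which is the equality case of the Cauchy--Schwarz inequality; therefore $f=cg$ for some $c\in\C$, and then $|c|=|c|\,\|g\|_{L^2(A)}^2=|\langle f,g\rangle_{L^2(A)}|=1$. The argument is elementary and uses no structural hypothesis on $A$; the only points requiring a little care are the two-variable polarization that produces Moyal's identity (tracking the conjugate-linearity in the window) and the separate handling of the trivial case $f=g=0$, so I do not expect any serious obstacle here.
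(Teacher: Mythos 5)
Your argument is correct, but it follows a genuinely different route from the paper. The paper regards, for each fixed $x$, the section $V_ff(x,\cdot)$ as the Fourier transform of the $L^1$ function $f\,\overline{T_xf}$, invokes Fourier uniqueness to get $f(y)\overline{f(y-x)}=g(y)\overline{g(y-x)}$ for a.e.\ $(x,y)$ (with a Fubini step that requires checking $\sigma$-compactness of the relevant supports), and then multiplies by $f(y-x)$ and integrates in $x$ to obtain the identity $\|f\|_{L^2(A)}^2 f=\langle f,g\rangle_{L^2(A)}\,g$. You instead stay entirely at the Hilbert-space level: you polarize the Parseval identity \eqref{eq energy} twice (first in the analysed function, then in the window, minding the conjugate-linearity) to obtain Moyal's orthogonality relation $\langle V_{g_1}f_1,V_{g_2}f_2\rangle_{L^2(\AhA)}=\langle f_1,f_2\rangle_{L^2(A)}\overline{\langle g_1,g_2\rangle_{L^2(A)}}$, specialize to get $\langle V_ff,V_gg\rangle=|\langle f,g\rangle|^2$, and conclude via the equality case of Cauchy--Schwarz. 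Both proofs are complete and essentially the same length; yours is ``softer'' in that it bypasses the Fourier uniqueness theorem and the measurability/Fubini care, at the cost of establishing Moyal's formula (which the paper never states, though it is standard and does follow from \eqref{eq energy} exactly as you describe). The paper's computation is more explicit and in effect re-derives the inversion formula $\|f\|^2 f=\langle f,g\rangle g$, which directly exhibits the constant $c$. Your handling of the degenerate case $f=g=0$ and of the normalization is fine; no gaps.
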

\begin{proof}
For the sake of completeness we provide a proof which is similar (but not exactly equal) to that in \cite[Section 4.2]{grochenig_book} for $A=\R^d$. 

For every $x\in A$, we can regard $V_f f(x,\cdot)$ as the Fourier transform of the $L^1$ function $f\overline{T_x f}$. Hence, if $V_f f=V_g g$, by the Fourier uniqueness theorem, for every $x\in A$ we have 
\[
f(y) \overline{f(y-x)}=g(y) \overline{g(y-x)}
\]
for almost every $y\in A$. By the Fubini theorem (both sides vanish on the complement of a $\sigma$-compact set in $A\times A$; cf. \cite[page 44]{follandbook}), the above equality holds for almost every $(x,y)\in A\times A$. Multiplying by $f(y-x)$ and integrating with respect to $x$ yields $ \|f\|^2_{L^2(A)} f=\langle f,g\rangle_{L^2(A)}g$, which gives the desired conclusion. 

The converse implication is obvious. 
\end{proof}

\subsection{Characters of second degree}\label{sec char} We recall from \cite{weil64} (see also \cite{reiter89}) the  notion of character of second degree. 
\begin{definition}\label{def charsecond}
    Given a continuous symmetric homomorphism $\phi:A\to \hA$ (hence $\langle x,\phi(y)\rangle=\langle y,\phi(x)\rangle$ for $x,y\in A$) a character of second degree of $A$, associated to $\phi$, is a continuous function $f:A\to\mathbb{T}$ such that 
\[
f(x+y)=f(x)f(y)\langle x,\phi(y)\rangle \qquad x,y\in A.
\]
\end{definition}
The following result provides the existence and uniqueness, up to multiplication by characters, of characters of second degree associated to a given homomorphism $\phi$ as above. 
\begin{theorem}\label{thm reiter} 
Given a continuous symmetric homomorphism $\phi:A\to\hA$ there exists a character of second degree associated to $\phi$. Two characters of second degree associated to the same $\phi$ differs by the multiplication by a character. 
\end{theorem}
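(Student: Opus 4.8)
The plan is to establish the two assertions in turn, the uniqueness being a one-line computation and the existence requiring a short detour through central extensions. For the uniqueness, suppose $f_1,f_2:A\to\mathbb{T}$ are both characters of second degree associated to $\phi$. Dividing the two defining identities $f_i(x+y)=f_i(x)f_i(y)\langle x,\phi(y)\rangle$ ($i=1,2$) shows that the continuous map $g:=f_1\overline{f_2}$ satisfies $g(x+y)=g(x)g(y)$, so $g\in\hA$; hence $f_1=g f_2$ with $g$ a character. (Conversely, $\chi f$ is again a character of second degree associated to $\phi$ for every $\chi\in\hA$, as one checks directly.)

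For the existence, the idea is that the bicharacter $c(x,y):=\langle x,\phi(y)\rangle$ is a normalized continuous $\mathbb{T}$-valued $2$-cocycle on $A$ — the cocycle identity $c(y,z)\,c(x,y+z)=c(x+y,z)\,c(x,y)$ being an immediate consequence of the bilinearity of the pairing and of $\phi$ being a homomorphism — which is moreover \emph{symmetric}, $c(x,y)=c(y,x)$, precisely because $\phi$ is symmetric. Consequently the twisted law $(x,s)\cdot(y,t):=(x+y,\,st\,c(x,y))$ makes $E:=A\times\mathbb{T}$, with the product topology, into an \emph{abelian} locally compact group fitting in a topologically exact sequence $0\to\mathbb{T}\to E\to A\to 0$, with $t\mapsto(0,t)$ and $(x,s)\mapsto x$; and a continuous $f:A\to\mathbb{T}$ is a character of second degree associated to $\phi$ if and only if $x\mapsto(x,f(x))$ is a continuous homomorphic section of this sequence. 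It therefore suffices to prove that this extension splits.

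To split it I would invoke Pontryagin duality: since duality is an exact contravariant self-equivalence of the category of LCA groups (see e.g.\ \cite{hewitt70}), the dual sequence $0\to\hA\to\hat E\to\widehat{\mathbb{T}}\to 0$ is topologically exact, and $\widehat{\mathbb{T}}\cong\bZ$. Lifting a generator of $\bZ$ to $\hat E$ produces a homomorphic section $\bZ\to\hat E$, automatically continuous because $\bZ$ is discrete; in other words $\mathrm{Ext}(A,\mathbb{T})\cong\mathrm{Ext}(\bZ,\hA)=0$ in the category of LCA groups. Dualizing this splitting back, the original sequence $0\to\mathbb{T}\to E\to A\to 0$ splits as well, and the second component of a continuous homomorphic section is the desired character of second degree. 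I expect the only genuinely delicate point to be exactly this last step: one must be sure that the splitting obtained is \emph{topological}, not merely algebraic, and it is the reduction to the \emph{discrete} group $\bZ$ afforded by duality that guarantees it. (An alternative, more computational route uses the structure theorem to write $A$ locally as a product of copies of $\R$, discrete groups and compact groups, exhibiting $f$ explicitly on each factor — on $\R^n$ one takes the quadratic chirp $f(x)=\langle x,\phi(x)\rangle^{1/2}$ for a fixed branch of the square root; the bookkeeping at the $2$-torsion, where $f$ need not be real-valued, is the main nuisance of that approach, and the argument above sidesteps it.)
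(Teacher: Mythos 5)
Your proof is correct, but note that the paper does not actually prove the existence part at all: it records uniqueness as immediate from the definition (exactly your one-line computation with $f_1\overline{f_2}$) and for existence simply cites Igusa's Lemma 6, mentioning also the proof of M.~Burger reproduced in Reiter's book. So you have supplied a genuine argument where the paper only gives a reference. Your route --- realizing $\langle x,\phi(y)\rangle$ as a symmetric continuous $2$-cocycle, forming the abelian locally compact extension $0\to\mathbb{T}\to E\to A\to 0$ with $E=A\times\mathbb{T}$ twisted by the cocycle, and splitting it by dualizing to $0\to\hA\to\widehat{E}\to\mathbb{Z}\to 0$ and lifting a generator of $\mathbb{Z}$ --- is sound and is in fact close in spirit to the Burger/Baggett--Kleppner circle of ideas (injectivity of $\mathbb{T}$ in the category of LCA groups), as opposed to Igusa's more structural argument. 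The two points that carry the real weight, and which you should make explicit if this were written out in full, are: (i) the sequence you build is \emph{proper} (the inclusion $t\mapsto(0,t)$ is a closed topological embedding and the projection is open), so that the dual sequence is again topologically exact --- surjectivity of $\widehat{E}\to\widehat{\mathbb{T}}$ being the character-extension theorem \cite[Corollary 24.12]{hewitt63} and the identification of the kernel with $\hA$ coming from \cite[Theorem 24.11]{hewitt63}; and (ii) the retraction $\widehat{E}\to\hA$ obtained from the lifted generator is continuous (it is, since it is built from continuous maps and the kernel is topologically $\hA$ by properness), so that its Pontryagin dual is the desired continuous homomorphic section $A\to E$, whose second component is the character of second degree. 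With those two points spelled out there is no gap; your parenthetical alternative via the structure theorem is indeed messier at $2$-torsion and is well avoided.
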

The uniqueness is an immediate consequence of the definition. The existence was first proved in \cite[Lemma 6]{igusa68} (see also \cite[page 37]{reiter89} for an easier proof due to M. Burger, and \cite[Theorem 2.3]{baggett73} for generalizations). Explicit constructions are available in particular cases; for example, if multiplication by $2$ is an automorphism of $G$ then one can take $f(x)=\langle x,\phi(2^{-1}x)\rangle$ \cite[page 146]{weil64}. We refer the reader to \cite[Section 7.7]{reiter89} for an explicit construction when $G$ a finite dimensional vector space over a local field, 
\cite{feichtinger2} for  $\mathbb{Z}_N$, and \cite{kaiblinger09} for finite Abelian groups.

\section{Some symplectic analysis on $\AhA$}\label{sec sec3}
In this section we prove some auxiliary results concerning subgroups of $\AhA$, in connection with the standard symplectic structure of $\AhA$, that is the bicharacter $\sigma:(\AhA)\times (\AhA)\to\mathbb{T}$ given by (cf. \eqref{eq 2.2})
\begin{equation}\label{defsigma}
\sigma((x,\xi),(y,\eta))= \langle y,\xi\rangle\overline{\langle x,\eta\rangle}
\qquad (x,\xi),\ (y,\eta)\in \AhA.  \end{equation}
The following description of the compact open subgroups of $\AhA$ will be crucial in the following. 
\begin{proposition}\label{pro 1}
Let $H\subset 
 A$, $K\subset \hA$ be compact open subgroups, and let $\phi:H\to \hA/K$ be a continuous homomorphism. Then the set 
\begin{equation}\label{eq subgroup}
     G=\{(x,\xi)\in\AhA:\ x\in H,\ \xi\in \phi(x)\}
 \end{equation}
 is a compact open subgroup of $\AhA$, and $|G|=|H||K|$. 
 
 Every compact open subgroup of $\AhA$ arises in this way for a unique triple $(H,K,\phi)$ as above.
\end{proposition}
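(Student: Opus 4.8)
The plan is to split the statement into three parts: (i) $G$ defined by \eqref{eq subgroup} is a compact open subgroup; (ii) its measure is $|H||K|$; (iii) every compact open subgroup of $\AhA$ arises from a unique triple $(H,K,\phi)$.

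For (i), I would first check that $G$ is a subgroup. If $(x,\xi),(y,\eta)\in G$, then $x,y\in H$ and $\xi\in\phi(x)$, $\eta\in\phi(y)$; since $\phi$ is a homomorphism into $\hA/K$, $\xi+\eta\in\phi(x)+\phi(y)=\phi(x+y)$, and $x+y\in H$ because $H$ is a subgroup, so $(x,\xi)+(y,\eta)\in G$; likewise $-(x,\xi)\in G$ and $(0,0)\in G$. For openness, note that $\phi(x)$ is a coset of $K$ in $\hA$, hence open in $\hA$, so for each $x\in H$ the fiber $\{x\}\times\phi(x)$ is open in $\{x\}\times\hA$; combined with the fact that $H$ is open in $A$, one sees that $G$ is open in $\AhA$ (more carefully: $G\supset \{x_0\}\times\phi(x_0)$ and, using that $H$ is open and $\phi$ is continuous — so $\phi^{-1}$ of a point is open in the relative topology, meaning $\phi$ is locally constant onto the discrete group $\hA/K$ restricted to the compact $H$... actually I should argue via the next point). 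A cleaner route: since $H$ is open and compact, $H$ is also closed, and the restriction $\phi\colon H\to\hA/K$ is continuous into a discrete group (because $K$ is open in $\hA$), hence locally constant; as $H$ is compact, $\phi$ takes finitely many values, so $G$ is a finite union of sets of the form $U\times(\xi_j+K)$ with $U\subset H$ open, hence $G$ is open; being a finite union of closed sets (products of closed sets) intersected with the closed set $H\times\hA$, $G$ is also closed, and since it is contained in the compact set $H\times(\bigcup_j(\xi_j+K))$, $G$ is compact.

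For (ii), with the notation above, $G=\bigsqcup_{j} \phi^{-1}(\xi_j+K)\times(\xi_j+K)$ where the $\xi_j+K$ are distinct cosets and $\{\phi^{-1}(\xi_j+K)\}$ partitions $H$. Hence $|G|=\sum_j |\phi^{-1}(\xi_j+K)|\,|K| = |K|\sum_j|\phi^{-1}(\xi_j+K)| = |K|\,|H|$, using the translation invariance of Haar measure to get $|\xi_j+K|=|K|$. So $|G|=|H||K|$.

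For (iii) — which I expect to be the main point — I would recover the triple from a given compact open subgroup $G\subset\AhA$ as follows. Let $p\colon\AhA\to A$ and $q\colon\AhA\to\hA$ be the coordinate projections, set $H:=p(G)$ and $K:=q(G\cap(\{0\}\times\hA))$, i.e. $K=\{\xi\in\hA:(0,\xi)\in G\}$. Since $G$ is open and $p,q$ are open maps, $H$ is open in $A$; since $G$ is compact, $H=p(G)$ is compact, so $H$ is a compact open subgroup of $A$, and similarly $K$ is a compact open subgroup of $\hA$ (it is the kernel of $G\to A$ composed with... it is $G\cap(\{0\}\times\hA)$, which is a closed subgroup of the compact open $G$, hence compact, and open in $\{0\}\times\hA$ because $G$ is open). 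Next, for $x\in H$ define $\phi(x):=\{\xi\in\hA:(x,\xi)\in G\}$. One checks that this set is nonempty (as $x\in p(G)$) and is a coset of $K$: if $(x,\xi),(x,\xi')\in G$ then $(0,\xi-\xi')=(x,\xi)-(x,\xi')\in G$, so $\xi-\xi'\in K$; conversely $(x,\xi)+(0,K)\subset G$. Thus $\phi\colon H\to\hA/K$ is well-defined, and it is a homomorphism because $G$ is a subgroup: $(x,\xi),(y,\eta)\in G\Rightarrow(x+y,\xi+\eta)\in G$, giving $\phi(x)+\phi(y)\subset\phi(x+y)$, and equality of cosets follows. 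Continuity of $\phi$ into the discrete group $\hA/K$ amounts to $\phi$ being locally constant, which holds because $G$ is open: near any $x_0\in H$, the translate of a small neighborhood keeps $(x,\xi)$ in $G$ for the same $\xi$-coset. Finally $G$ equals the set \eqref{eq subgroup} built from this $(H,K,\phi)$ by construction, and uniqueness is immediate since $H,K,\phi$ were each recovered canonically from $G$ (any triple producing $G$ must have $p(G)=H$, $\{\xi:(0,\xi)\in G\}=K$, and $\phi(x)=\{\xi:(x,\xi)\in G\}$).

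The only slightly delicate point is the interplay between the topology and the algebra in showing $H$ and $K$ are genuinely compact \emph{and} open and that $\phi$ is continuous; once one exploits that $G$ compact open forces $G\to\hA/K$ to land in a discrete quotient, everything is locally constant on the compact set $H$, so finiteness of fibers makes the measure computation and the continuity of $\phi$ routine.
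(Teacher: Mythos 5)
Your proof is correct. The forward direction (subgroup, openness, compactness, and the measure computation) follows essentially the same route as the paper: both exploit that $\hA/K$ is discrete, so the continuous map $\phi$ on the compact set $H$ has finite image, giving the decomposition of $G$ into finitely many compact open ``rectangles'' $\phi^{-1}(\xi_j+K)\times(\xi_j+K)$. Where you genuinely diverge is in the converse. The paper sets $H=\pi_1(G)$, $K=G\cap\hA$, forms the short exact sequence $0\to K\to G\to H\to 0$, invokes the open mapping theorem for locally compact groups to get a \emph{topological} isomorphism $\beta:G/K\to H$, and defines $\phi$ as a composition through this quotient. You instead define $\phi(x)$ directly as the fiber $\{\xi:(x,\xi)\in G\}$, verify by hand that it is a coset of $K$ and that $\phi$ is a homomorphism, and prove continuity by observing that openness of $G$ forces $\phi$ to be locally constant (if $U\times\{\xi_0\}\subset G$ then $\phi\equiv\xi_0+K$ on $U\cap H$). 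This is more elementary and self-contained -- it avoids the cited structural theorem entirely -- at the cost of a few more explicit verifications; the paper's route is shorter on the page but leans on nontrivial machinery. Your uniqueness argument (each of $H$, $K$, $\phi$ is canonically recovered from $G$) matches the paper's. No gaps.
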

\begin{proof}
It is easy to see that the set $G$ in \eqref{eq subgroup} is indeed a subgroup of $\AhA$. Since $H$ and $K$ are compact and $\phi$ is continuous it follows from some general (non-trivial) results from the theory of topological groups (cf. \cite[Note 5.24]{hewitt63}), that $G$ is compact. However, since $H$ and $K$ are open, we can apply a more direct argument, that also has the advantage to give some more information, namely that $G$ is in fact a finite union of pairwise disjoint compact open subsets of $\AhA$ of product type. Precisely, since $K$ is open, $\hA/K$ is discrete, and $\phi(H)\subset \hA/K$ is compact, therefore finite. On the other hand we have 
\[
G=\cup_{B\in \phi(H)}\, \phi^{-1}(\{B\})\times B.
\]
The sets $B\subset\hA$ above are compact and open, because they are cosets of $K$ in $\hA$. The subsets $\phi^{-1}(\{B\})\subset H$ are open and closed and therefore compact, because $H$ is compact. This shows that $G$ is compact and open. 

For every $x\in H$, $\phi(x)$ is a coset of $K$ in $\hA$, and therefore has the same measure as $K$. Hence   
\[
|G|=\int_H \int_{\phi(x)} \,d\xi\, dx= |H||K|.
\]

Now, given any compact open subgroup $G\subset\AhA$, let $\pi_1:G\to A$ be the projection on the first factor, and set $H=\pi_1(G)$ and $K=G\cap \hA={\rm Ker}\, \pi_1$ (where $\hA$ is regarded as a subgroup of $\AhA$). It is clear that $K$ is a compact and open subgroup of $\hA$ and $H$ is a compact open subgroup of $A$, because the projection $A\times\hA\to A$ is an open map. Hence we have the short exact sequence 
\[
0\to K\to G\to H\to 0.
\]
The map $G\to H$ is open, and therefore the algebraic isomorphism 
 $\beta:G/K\to H$ is in fact a topological isomorphism (\cite[Theorem 5.27]{hewitt63}). Now set $\phi=\pi_2'\circ \beta^{-1}$, where $\pi_2': G/K\to \hA/K$ is the natural epimorphism induced by the projection $\pi_2:G\to \hA$ on the quotient spaces. Then $\phi:H\to\hA/K$ is a continuous homomorphism, and clearly \eqref{eq subgroup} holds for this triple $(H,K,\phi)$. 

Finally, it is clear from \eqref{eq subgroup} that the triple $(H,K,\phi)$ is uniquely determined by $G$.
\end{proof}
\begin{remark}
   It is easy to see that $G$ and $H\times K$ in Proposition \ref{pro 1} are homeomorphic. Indeed, with the notation of the above proof, choosing a representative $\xi$ out of any $[\xi]\in \phi(H)$ ($\phi(H)$ is a finite set), yields a continuous section $\alpha:\phi(H)\to \hA$ and therefore a lifting $\tilde{\phi}:=\alpha \circ \phi: H\to \hA$ of $\phi$. We now extend $\tilde{\phi}$ to $A$ by setting $\tilde{\phi}(x)=0$ for $x\in A\setminus H$. Since $H$ is both open and closed, $\tilde{\phi}:A\to\hA$ is continuous, and the map $\AhA\to\AhA$ given by $(x,\xi)\mapsto (x, \tilde{\phi}(x)+\xi)$ is a homeomorphism (although not a group homomorphism) which maps $H\times K$ onto $G$.  
\end{remark}
We now single out a class of subgroups and provide a convenient characterization in the spirit of Proposition \ref{pro 1}.   
\begin{definition}
A subgroup $G\subset \AhA$ is called {\rm isotropic} if $\sigma(z,w)=1$ (cf. \eqref{defsigma}) for every $z,w\in\AhA$.
\end{definition}

Consider a triple $(H,K,\phi)$ as in Proposition \ref{pro 1} and assume, in addition, that $K\subset H^\bot$. Then  a natural epimorphism $\hA/K\to \hA/H^\bot\simeq \widehat{H}$ is induced. Hence, for $x\in H$, we can regard $\phi(x)\in \hA/K$ as a character of $H$, whose value at $y\in A$ will be denoted by $\langle y,\phi(x)\rangle$. In concrete terms, \[
\langle y,\phi(x)\rangle:= \langle y,\xi\rangle\quad \textrm{for any}\quad \xi\in \phi(x).
\] 
\begin{definition}
If $K\subset H^\bot$, a continuous homomorphism $\phi:H\to \hA/K$ is said  symmetric if 
\[
\langle x,\phi(y)\rangle= \langle y,\phi(x)\rangle\qquad x,y\in H.
\]
\end{definition}
We recall, for future reference, that if $H\subset A$ is a compact open subgroup, then $H^\bot$ is a compact open subgroup of $\hA$ (see e.g. \cite[Lemma 6.2.3 (b)]{grochenig98}). 

\begin{proposition}\label{pro 2}
Let $G\subset\AhA$ be a compact open subgroup of $\AhA$ and let $(H,K,\phi)$ be the corresponding triple (cf. Proposition \ref{pro 1}). Then $G$ is isotropic if and only if $K\subset H^\bot$ and $\phi:H\to \hA/K$ is symmetric.
Moreover $|G|\leq 1.$
\end{proposition}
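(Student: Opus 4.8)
The plan is to translate the isotropy condition $\sigma(z,w)=1$ for all $z,w\in G$ into conditions on the triple $(H,K,\phi)$ supplied by Proposition~\ref{pro 1}, using the explicit form \eqref{defsigma} of $\sigma$. First I would pick two arbitrary elements $z=(x,\xi)$, $w=(y,\eta)$ of $G$; by \eqref{eq subgroup} this means $x,y\in H$, $\xi\in\phi(x)$, $\eta\in\phi(y)$, and the isotropy condition reads $\langle y,\xi\rangle=\langle x,\eta\rangle$. Since $G$ is a group, $(0,\zeta)\in G$ for every $\zeta\in K$, so specializing $w=(0,\eta)$ with $\eta\in K$ forces $\langle x,\eta\rangle=1$ for all $x\in H$, i.e.\ $K\subset H^\bot$. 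Once $K\subset H^\bot$ is known, the pairing $\langle y,\phi(x)\rangle$ is well defined (independent of the representative in the coset $\phi(x)$, because two representatives differ by an element of $K\subset H^\bot$ and $y\in H$), and the remaining content of $\sigma(z,w)=1$ is exactly $\langle y,\phi(x)\rangle=\langle x,\phi(y)\rangle$ for all $x,y\in H$ — which is the symmetry of $\phi$. Conversely, if $K\subset H^\bot$ and $\phi$ is symmetric, then for any $z=(x,\xi)$, $w=(y,\eta)$ in $G$ we have $\langle y,\xi\rangle=\langle y,\phi(x)\rangle=\langle x,\phi(y)\rangle=\langle x,\eta\rangle$, so $\sigma(z,w)=1$ and $G$ is isotropic. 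This establishes the equivalence.

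For the inequality $|G|\le 1$, I would use the measure formula $|G|=|H|\,|K|$ from Proposition~\ref{pro 1} together with $K\subset H^\bot$. Recall that $|H^\bot|=|H|^{-1}$ — this is the standard annihilator/Plancherel identity for a compact open subgroup (it follows, e.g., from the fact that $\widehat{H}\simeq\hA/H^\bot$ and that $\chi_H$ has Fourier transform $|H|\chi_{H^\bot}$, combined with Plancherel; it is in essence \cite[Lemma 6.2.3]{grochenig98}). Since $K$ is a subgroup of $H^\bot$ it satisfies $|K|\le |H^\bot|=|H|^{-1}$, and hence $|G|=|H|\,|K|\le |H|\cdot|H|^{-1}=1$.

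I do not expect a genuine obstacle here: the argument is a direct unwinding of definitions. The one point requiring a little care is the well-definedness of the pairing $\langle y,\phi(x)\rangle$ and the correct handling of cosets versus representatives — one must verify that $K\subset H^\bot$ is precisely what makes $\langle y,\xi\rangle$ independent of the choice $\xi\in\phi(x)$ for $y\in H$, so that the symmetry condition is meaningful. The other mild point is to cite (or briefly justify) the identity $|H^\bot|=|H|^{-1}$ for the normalization of Haar measures fixed by the Plancherel formula; with that in hand the bound $|G|\le 1$ is immediate.
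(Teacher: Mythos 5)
Your proof is correct and follows essentially the same route as the paper: extract $K\subset H^\bot$ by pairing a general element of $G$ against an element of $K$ (the paper tests against $w=(x,\xi+\eta)$ with $\eta\in K$ rather than $(0,\eta)$, a trivial variant), then read off symmetry of $\phi$ from $\sigma(z,w)=1$, and conclude $|G|=|H||K|\le|H||H^\bot|=1$ via Plancherel. Your remark on the well-definedness of $\langle y,\phi(x)\rangle$ matches the discussion the paper gives just before defining symmetry.
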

\begin{proof}
Let $G$ be isotropic. Let $x\in H$, $\xi\in \phi(x)$ and $\eta\in K$, so that $z=(x,\xi)\in G$ and $w=(x,\xi+\eta)\in G$. We have 
\[
1=\sigma(z,w)=\langle x,\xi\rangle\overline{\langle x,\xi+\eta\rangle}=\overline{\langle x, \eta\rangle},
\]
and therefore $K\subset H^\bot$. Now, if $x,y\in H$ and $\xi\in \phi(x)$, $\eta\in \phi(y)$, so that  $z=(x,\xi)\in G$ and $w=(y,\eta)\in G$, we have $1=\sigma(z,w)=\langle y,\xi\rangle\overline{\langle x,\eta\rangle}$, which means that $\phi$ is symmetric. 

Vice versa, it is clear from the above computation that if $K\subset H^\bot$ and $\phi$ is symmetric then $G$ is isotropic. 

Finally, by Proposition \ref{pro 1},  $|G|=|H||K|\leq |H||H^\bot|=1$, where the last equality follows from the Plancherel formula (see \cite[Formula (4.4.6)]{reiter00}).
\end{proof}
We finally characterize the compact open isotropic subgroups of maximum measure.

\begin{proposition}\label{pro 3}
    Let $G\subset\AhA$ be a compact open isotropic subgroup and let $(H,K,\phi)$ be the corresponding triple (cf. Propositions \ref{pro 1} and \ref{pro 2}). The following statements are equivalent: 

\begin{itemize}
\item[(a)] $|G|=1$. \medskip\par
\item[(b)] $K=H^\bot$.
\medskip\par
\item[(c)] $G$ is a maximal compact open isotropic subgroup, i.e. if $G'\subset\AhA$ is a compact open isotropic subgroup with $G\subset G'$ then $G'=G$.
\end{itemize}

\end{proposition}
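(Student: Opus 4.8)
The plan is to prove (a)$\Leftrightarrow$(b), then (b)$\Rightarrow$(c), and finally (c)$\Rightarrow$(b) in contrapositive form. The equivalence (a)$\Leftrightarrow$(b) is immediate from the facts already at hand: Proposition \ref{pro 1} gives $|G|=|H||K|$, the Plancherel formula gives $|H||H^\bot|=1$, and since $K\subset H^\bot$ with both subgroups compact and open the index $[H^\bot:K]$ is finite; hence $|G|=1$ if and only if $K=H^\bot$.

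For (b)$\Rightarrow$(c) I would assume $K=H^\bot$ and take an arbitrary compact open isotropic subgroup $G'\supset G$, with associated triple $(H',K',\phi')$. Projecting onto $A$ and intersecting with $\hA$ give $H\subset H'$ and $K\subset K'$; on the other hand isotropy of $G'$ (Proposition \ref{pro 2}) together with $H\subset H'$ yields $K'\subset (H')^\bot\subset H^\bot=K$. Thus $K=K'=H^\bot=(H')^\bot$, and since $(H^\bot)^\bot=H$ and $((H')^\bot)^\bot=H'$ (Pontryagin duality for closed subgroups) we get $H=H'$. Finally $\phi=\phi'$, because for each $x\in H$ the cosets $\phi(x)$ and $\phi'(x)$ of $K=K'$ both contain the second coordinate of any $(x,\xi)\in G$; by the uniqueness in Proposition \ref{pro 1} it follows that $G=G'$.

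The substance lies in (c)$\Rightarrow$(b), which I would argue by contraposition. Assuming $K\subsetneq H^\bot$, I would fix $\eta\in H^\bot\setminus K$ and let $K'$ be the subgroup of $\hA$ generated by $K$ and $\eta$. Since $K'$ contains the open subgroup $K$ and is contained in the compact group $H^\bot$, it is again compact and open, and $K\subsetneq K'\subset H^\bot$. Let $\phi':H\to\hA/K'$ be the composition of $\phi$ with the canonical projection $\hA/K\to\hA/K'$. Then $\phi(x)\subset\phi'(x)$ for every $x\in H$, so the subgroup $G'$ attached to $(H,K',\phi')$ by Proposition \ref{pro 1} contains $G$, and $|G'|=|H||K'|>|H||K|=|G|$. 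It remains to see, via Proposition \ref{pro 2}, that $G'$ is isotropic: $K'\subset H^\bot$ is clear, and $\phi'$ is symmetric because the character of $H$ that a coset in $\hA/K'$ represents depends only on its class modulo $H^\bot$, and $\phi(x)\subset\phi'(x)$ shows this is the same character attached to $\phi(x)$; hence the symmetry of $\phi$ passes to $\phi'$. So $G'$ is a compact open isotropic subgroup strictly containing $G$, and $G$ is not maximal.

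I expect the only genuinely nontrivial point to be this last construction — specifically, checking that enlarging $K$ keeps the subgroup isotropic, i.e. that the symmetry condition survives in the coarser quotient $\hA/K'$, together with the minor topological observation that $K'$ is again compact and open. The measure computation in (a)$\Leftrightarrow$(b) and the annihilator bookkeeping in (b)$\Rightarrow$(c) are routine given Propositions \ref{pro 1} and \ref{pro 2} and Pontryagin duality.
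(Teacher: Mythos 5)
Your proof is correct and follows essentially the same route as the paper: the measure count for (a)$\Leftrightarrow$(b), the annihilator chain $K'\subset (H')^{\bot}\subset H^{\bot}=K$ for (b)$\Rightarrow$(c), and, for the maximality direction, enlarging $K$ inside $H^{\bot}$, pushing $\phi$ forward to the coarser quotient, and checking that symmetry survives because elements of $H^{\bot}$ pair trivially with $H$. The only cosmetic difference is that the paper enlarges $K$ to all of $H^{\bot}$ in one step, whereas you adjoin a single $\eta\in H^{\bot}\setminus K$; both yield the required strictly larger compact open isotropic subgroup.
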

\begin{proof}
(a) $\Rightarrow$ (b). We know from Proposition \ref{pro 2}  that $K\subset H^\bot$. If this inclusion were strict then $|K|<|H^\bot|$, because $|H^\bot|<\infty$ and $|H^\bot\setminus K|>0$, being $H^\bot\setminus K$ open. Hence, since $0<|H|<\infty$, by Proposition \ref{pro 1} $|G|=|H||K|<|H||H^\bot|= 1$, which is a contradiction.

(b) $\Rightarrow$ (c). Let $G'\subset\AhA$ be a compact open isotropic subgroup with $G\subset G'$. Let $(H', K', \phi')$ be the corresponding triple, as in Propositions \ref{pro 1} and \ref{pro 2}. Since $G\subset G'$ we have $H\subset H'$ and $K\subset K'$. On the other hand, by Proposition \ref{pro 2} and the assumption, we have $H'\subset K'^\bot\subset K^\bot=H$. Hence $H=H'$ and $K=K'$. Since, for $x\in H$, $\phi(x)$ and $\phi'(x)$ are two cosets of $K$ in $\hA$ and $\phi(x)\subset \phi'(x)$ we have $\phi(x)=\phi'(x)$ and therefore $G'=G$. 

(c) $\Rightarrow$ (a). Suppose, by contradiction, that $|G|=|H||K|<1$. Then the inclusion $K\subset H^\bot$ is strict. Consider the subgroup $G'\subset\AhA$ associated to the triple $(H,H^\bot,\phi')$, with $\phi'=\alpha\circ\phi:H\to\hA/H^\bot$, where $\alpha:\hA/K\to\hA/H^\bot$ is the natural epimorphism. Since $\phi$ is symmetric, the same holds for  $\phi'$. Hence, by Proposition \ref{pro 2}, $G'$ is a compact open isotropic subgroup of $\AhA$ and $G\subset G'$ with strict inclusion, because $G\cap\hA=K\subset H^\bot=G'\cap\hA$ strictly. This is a contradiction. 
\end{proof}
\begin{corollary}\label{rem max}
Every compact open isotropic subgroup of $\AhA$ is contained in a maximal compact open isotropic subgroup. 
\end{corollary}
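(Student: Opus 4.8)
The plan is to enlarge any given compact open isotropic subgroup to one of measure $1$, which by Proposition \ref{pro 3} is the same as producing a maximal compact open isotropic subgroup containing it. So let $G\subset\AhA$ be a compact open isotropic subgroup, and let $(H,K,\phi)$ be the triple associated to it by Propositions \ref{pro 1} and \ref{pro 2}; in particular $K\subset H^\bot$ and $\phi:H\to\hA/K$ is symmetric. If $K=H^\bot$ then $G$ is already maximal by Proposition \ref{pro 3}, so we may assume the inclusion $K\subset H^\bot$ is strict.

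The construction is exactly the one used in the proof of the implication (c) $\Rightarrow$ (a) of Proposition \ref{pro 3}: set $\phi'=\alpha\circ\phi:H\to\hA/H^\bot$, where $\alpha:\hA/K\to\hA/H^\bot$ is the natural epimorphism, and let $G'\subset\AhA$ be the compact open subgroup associated to the triple $(H,H^\bot,\phi')$ via Proposition \ref{pro 1}. For $x\in H$, the coset $\phi(x)$ of $K$ is contained in the coset $\phi'(x)$ of $H^\bot$, which immediately gives $G\subset G'$; moreover, since the value of a character of $H$ depends only on its class modulo $H^\bot$, we have $\langle x,\phi'(y)\rangle=\langle x,\phi(y)\rangle$ for $x,y\in H$, so the symmetry of $\phi$ passes to $\phi'$. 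By Proposition \ref{pro 2}, $G'$ is then a compact open isotropic subgroup of $\AhA$, and by Proposition \ref{pro 1} together with the Plancherel formula $|G'|=|H||H^\bot|=1$. Finally, the implication (a) $\Rightarrow$ (c) of Proposition \ref{pro 3} shows that $G'$ is a maximal compact open isotropic subgroup, and it contains $G$, as required.

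There is essentially no serious obstacle here: the statement is a direct bookkeeping consequence of the triple parametrization of compact open subgroups (Proposition \ref{pro 1}), the isotropy criterion (Proposition \ref{pro 2}), and the characterization of maximality (Proposition \ref{pro 3}). The only points that warrant an explicit line are the verifications that $\phi'$ inherits symmetry from $\phi$ and that $G\subset G'$, both of which reduce to the observation that $\phi'(x)$ is the coset of $H^\bot$ containing the coset $\phi(x)$ of $K$.
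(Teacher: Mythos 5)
Your proof is correct and follows essentially the same route as the paper: both enlarge $G$ to the subgroup $G'$ associated with the triple $(H,H^\bot,\alpha\circ\phi)$, exactly as in the proof of (c) $\Rightarrow$ (a) of Proposition \ref{pro 3}, and then invoke Proposition \ref{pro 3} for maximality. Your version simply spells out the routine verifications (symmetry of $\phi'$ and the inclusion $G\subset G'$) that the paper leaves implicit.
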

\begin{proof}
Let $G\subset\AhA$ be a compact open isotropic subgroup and $(H,K,\phi)$ be its associated triple (Propositions \ref{pro 1} and \ref{pro 2}), hence $K\subset H^\bot$ and $\phi$ is symmetric. Then the compact open isotropic subgroup associated to the triple $(H,H^\bot,\phi')$, with $\phi':H\to \widehat{H}$ as in the proof of ``(c) $\Rightarrow$ (a)" in Proposition \ref{pro 3}, is maximal by Proposition \ref{pro 3} and contains $G$.
\end{proof}

\begin{remark}
Notice that the result of the above corollary is no longer valid if we drop the adjective ``isotropic". For example, consider the $p$-adic field $\mathbb{Q}_p$. Its topological dual $\widehat{\mathbb{Q}}_p$ can be identified with $\mathbb{Q}_p$. The balls $B_j:=\{x\in \mathbb{Q}_p:\ |x|_p\leq p^j\}$, $j\in\mathbb{Z}$, are compact open subgroups of $\mathbb{Q}_p$. Hence the sets $B_j\times B_j$, are compact open subgroups of $\mathbb{Q}_p\times \widehat{\mathbb{Q}}_p$. However, if $K\subset \mathbb{Q}_p\times \widehat{\mathbb{Q}}_p$ is a compact subset, then $K\subset B_j\times B_j$ for some $j$, and $K\subset B_{j+1}\times B_{j+1}$ strictly. We refer to \cite{enstad19} for a quick review of the $p$-adic number system from the perspective of time-frequency analysis. 
\end{remark}

We conclude this section with a result which will be useful in the following. 
\begin{proposition}\label{lem} 
Let $G\subset\AhA$ be a maximal compact open isotropic subgroup, hence $|G|=1$ (cf. Proposition \ref{pro 3}). Let $(H,H^\bot,\phi)$ be the corresponding triple (cf. Propositions \ref{pro 1}, \ref{pro 2} and \ref{pro 3}). If $f$ is a character of second degree of $H$ associated to $\phi$ (cf. Definition \ref{def charsecond}), then the function $G\to\mathbb{T}$ given by 
\[
(x,\xi)\mapsto\overline{f(x)}\qquad (x,\xi)\in G
\]
 is a character of second degree of $G$ associated to the continuous symmetric homomorphism $\phi':G\to\widehat{G}$  given by 
    \begin{equation}\label{eq phi'}
    \langle (x,\xi),\phi'(y,\eta)\rangle=\overline{\langle x,\eta\rangle} \qquad (x,\xi),(y,\eta)\in G. 
    \end{equation}
\end{proposition}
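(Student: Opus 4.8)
The plan is to verify directly the two required properties: first that $\phi'$ as defined in \eqref{eq phi'} is a well-defined continuous symmetric homomorphism $G\to\widehat{G}$, and then that $(x,\xi)\mapsto\overline{f(x)}$ is a character of second degree of $G$ associated to it, in the sense of Definition \ref{def charsecond}.

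First I would check that $\phi'$ is well defined. For each fixed $(y,\eta)\in G$, the formula $\langle(x,\xi),\phi'(y,\eta)\rangle=\overline{\langle x,\eta\rangle}$ defines a function of $(x,\xi)\in G$; I must show it is a character of $G$, i.e. that $(x,\xi)\mapsto\overline{\langle x,\eta\rangle}$ is continuous and multiplicative on $G$. Continuity is clear since it is the restriction to $G$ of the continuous character $(x,\xi)\mapsto\overline{\langle x,\eta\rangle}$ of $\AhA$, and multiplicativity follows from $\langle x+x',\eta\rangle=\langle x,\eta\rangle\langle x',\eta\rangle$. Hence $\phi'(y,\eta)\in\widehat{G}$ for each $(y,\eta)\in G$. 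That $(y,\eta)\mapsto\phi'(y,\eta)$ is itself a homomorphism $G\to\widehat{G}$ amounts to $\overline{\langle x,\eta+\eta'\rangle}=\overline{\langle x,\eta\rangle}\,\overline{\langle x,\eta'\rangle}$, which again is immediate; and it is continuous because $\widehat{G}$ is discrete ($G$ being compact open, or alternatively because $G$ is a maximal isotropic subgroup of measure $1$, hence $\widehat{G}$ has measure $1$ and is discrete) — actually, to be safe, I would note continuity of $\phi'$ follows from the joint continuity of $(x,\xi,y,\eta)\mapsto\langle x,\eta\rangle$ and the standard fact that a map into a discrete group that is continuous is locally constant; since $\widehat{G}$ is discrete this is automatic. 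Symmetry of $\phi'$ is built into \eqref{eq phi'}: $\langle(x,\xi),\phi'(y,\eta)\rangle=\overline{\langle x,\eta\rangle}$ is not manifestly symmetric in the two pairs, so here I must actually use that $G$ is isotropic. By Proposition \ref{pro 2}, isotropy of $G$ gives $\sigma((x,\xi),(y,\eta))=\langle y,\xi\rangle\overline{\langle x,\eta\rangle}=1$ for all $(x,\xi),(y,\eta)\in G$, i.e. $\langle y,\xi\rangle=\langle x,\eta\rangle$ for such pairs, equivalently $\overline{\langle x,\eta\rangle}=\overline{\langle y,\xi\rangle}$; the right-hand side is $\langle(y,\eta),\phi'(x,\xi)\rangle$, so $\phi'$ is symmetric.

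Next I would verify the character-of-second-degree relation. Denote $F(x,\xi)=\overline{f(x)}$ on $G$. Given $(x,\xi),(y,\eta)\in G$, their sum $(x+y,\xi+\eta)$ is in $G$, so $x,y,x+y\in H$ and I may apply the defining relation of $f$ as a character of second degree of $H$ associated to $\phi$: $f(x+y)=f(x)f(y)\langle x,\phi(y)\rangle$. Taking complex conjugates, $F(x+y,\xi+\eta)=\overline{f(x+y)}=\overline{f(x)}\,\overline{f(y)}\,\overline{\langle x,\phi(y)\rangle}=F(x,\xi)F(y,\eta)\overline{\langle x,\phi(y)\rangle}$. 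It remains to identify $\overline{\langle x,\phi(y)\rangle}$ with $\langle(x,\xi),\phi'(y,\eta)\rangle=\overline{\langle x,\eta\rangle}$. This is exactly the meaning of the triple $(H,H^\bot,\phi)$ attached to $G$: since $\eta\in\phi(y)$ (because $(y,\eta)\in G$) and $K=H^\bot$, by definition $\langle x,\phi(y)\rangle=\langle x,\eta\rangle$ for $x\in H$. Therefore $F(x+y,\xi+\eta)=F(x,\xi)F(y,\eta)\langle(x,\xi),\phi'(y,\eta)\rangle$, which is precisely the defining identity of a character of second degree of $G$ associated to $\phi'$. Finally, $F$ maps into $\mathbb{T}$ since $|f|=1$ on $H$, and $F$ is continuous as the composition of the continuous projection $(x,\xi)\mapsto x$ with the continuous function $\overline{f}$.

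The main (and essentially only) subtlety is bookkeeping: one must be careful that the symmetry of $\phi'$ genuinely requires the isotropy hypothesis on $G$ via Proposition \ref{pro 2}, and that the identification $\langle x,\phi(y)\rangle=\langle x,\eta\rangle$ uses the normalization $K=H^\bot$ from Proposition \ref{pro 3} together with $\eta\in\phi(y)$; everything else is a direct unwinding of definitions. No deep tool beyond Propositions \ref{pro 1}, \ref{pro 2}, \ref{pro 3} and Definition \ref{def charsecond} is needed.
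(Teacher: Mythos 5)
Your proof is correct and its core step is exactly the paper's: conjugate the defining relation $f(x+y)=f(x)f(y)\langle x,\phi(y)\rangle$ and identify $\overline{\langle x,\phi(y)\rangle}$ with $\overline{\langle x,\eta\rangle}$ via $\eta\in\phi(y)$. The additional verifications you supply (that $\phi'$ is a well-defined continuous symmetric homomorphism, using isotropy for the symmetry) are routine and left implicit in the paper, but they are accurate.
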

\begin{proof}
For $(x,\xi),(y,\eta)\in G$ we have 
\[
\overline{f(x+y)}=\overline{f(x)}\,\overline{f(y)}\,\overline{\langle x,\phi(y)\rangle}= \overline{f(x)}\,\overline{f(y)}\,\overline{\langle x,\eta\rangle}
\]
because $\eta\in\phi(y)$ (where $\phi(y)\in\widehat{H}\simeq \hA/H^\bot$ is now regarded as a coset of $H^\bot$ in $\hA$).
\end{proof}

\section{Optimizers for the ambiguity function}

For $f\in L^2(A)$, by \eqref{eq cs} we see that, for every $z\in\AhA$, \[
|V_f f(z)|\leq V_f f(0)=\|f\|_{L^2(A)}^2.
\]
The following result provides some properties of the set where $|V_f f|$ attains its maximum value.
\begin{proposition}\label{pro 4.1}
    Let $f\in L^2(A)\setminus\{0\}$, and 
    \[
    G=\{z\in\AhA:\ |V_f f(z)|=V_f f(0)\}.
    \]
    Then $G$ is a compact isotropic subgroup of $\AhA$ and the restriction of the function $\|f\|_{L^2(A)}^{-2} V_f f$ to $G$ is a character of second degree associated to the continuous symmetric homomorphism
    $
    \phi':G\to \widehat{G}$ given in \eqref{eq phi'}.
    
    Indeed, for $(x,\xi)\in G$, $(y,\eta)\in \AhA$ we have 
\begin{equation}\label{eq Vff char}
         V_f f(y+x,\eta+\xi)= \|f\|_{L^2(A)}^{-2}V_f f(x,\xi) V_f f(y,\eta) \overline{\langle x,\eta\rangle}.
\end{equation}
In particular, $|V_f f|$ is constant on every coset of $G$ in $\AhA$.
\end{proposition}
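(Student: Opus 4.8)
The plan is to base everything on the equality case of the Cauchy--Schwarz inequality \eqref{eq cs}. Since $\pi(z)$ is unitary we have $\|\pi(z)f\|_{L^2(A)}=\|f\|_{L^2(A)}$, so the condition $|V_f f(z)|=|\langle f,\pi(z)f\rangle_{L^2(A)}|=V_f f(0)=\|f\|_{L^2(A)}^2$ is precisely equality in \eqref{eq cs} at $z$, which forces $\pi(z)f=\lambda_z f$ for some $\lambda_z\in\mathbb{C}$, necessarily with $|\lambda_z|=1$. Hence $G=\{z\in\AhA:\ \pi(z)f=\lambda_z f\ \textrm{for some}\ \lambda_z\in\mathbb{T}\}$, and from $\langle f,\lambda_z f\rangle_{L^2(A)}=\overline{\lambda_z}\|f\|_{L^2(A)}^2$ I record, for $z\in G$, the identity $\lambda_z^{-1}=\overline{\lambda_z}=\|f\|_{L^2(A)}^{-2}V_f f(z)$.

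Next I would establish the structural properties of $G$. Using the elementary cocycle identity $\pi(x,\xi)\pi(y,\eta)=\overline{\langle x,\eta\rangle}\,\pi(x+y,\xi+\eta)$ (immediate from \eqref{eq transl}), for $z=(x,\xi),w=(y,\eta)\in G$ one computes $\pi(z+w)f=\langle x,\eta\rangle\,\pi(z)\pi(w)f=\langle x,\eta\rangle\lambda_z\lambda_w f$, so $z+w\in G$ with $\lambda_{z+w}=\langle x,\eta\rangle\lambda_z\lambda_w$; moreover $0\in G$, and $\pi(-z)=\langle x,\xi\rangle\pi(z)^{-1}$ shows $-z\in G$. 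Hence $G$ is a subgroup. It is closed because $V_f f$ is continuous (Proposition \ref{pro 2.1}), and compact because $V_f f$ vanishes at infinity, so that $G\subset\{z:\ |V_f f(z)|\geq\|f\|_{L^2(A)}^2\}$, a compact set. Finally, applying the commutation relation \eqref{eq 2.2} to $f$ and using $\pi(z)f=\lambda_z f$, $\pi(w)f=\lambda_w f$, one gets $\lambda_z\lambda_w f=\sigma(z,w)\lambda_z\lambda_w f$ for $z,w\in G$, hence $\sigma(z,w)=1$; thus $G$ is isotropic.

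The statement about the character of second degree is then a reformulation. Set $h:=\|f\|_{L^2(A)}^{-2}V_f f|_G$; by the first paragraph $h(z)=\overline{\lambda_z}\in\mathbb{T}$, so $h$ is continuous, and the relation $\lambda_{z+w}=\langle x,\eta\rangle\lambda_z\lambda_w$ gives
\[
h(z+w)=\overline{\lambda_{z+w}}=\overline{\langle x,\eta\rangle}\,\overline{\lambda_z}\,\overline{\lambda_w}=h(z)h(w)\,\langle z,\phi'(w)\rangle,
\]
with $\phi'$ as in \eqref{eq phi'}; here I use that $\phi':G\to\widehat{G}$ is a continuous homomorphism (a direct check, since $\phi'(y,\eta)$ is the restriction to $G$ of the character $(u,v)\mapsto\overline{\langle u,\eta\rangle}$) and that it is symmetric exactly because $G$ is isotropic (the symmetry requirement $\overline{\langle x,\eta\rangle}=\overline{\langle y,\xi\rangle}$ is $\sigma(z,w)=1$). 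So $h$ is a character of second degree of $G$ associated to $\phi'$.

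For the identity \eqref{eq Vff char}, I would take $z=(x,\xi)\in G$ and $w=(y,\eta)\in\AhA$ arbitrary; the cocycle identity together with unitarity of $\pi(z)$ gives
\[
V_f f(z+w)=\langle f,\pi(z+w)f\rangle=\overline{\langle x,\eta\rangle}\,\langle\pi(z)^{-1}f,\pi(w)f\rangle=\overline{\langle x,\eta\rangle}\,\lambda_z^{-1}\,V_f f(w),
\]
where the last step uses $\pi(z)^{-1}f=\lambda_z^{-1}f$ (the scalar $\lambda_z^{-1}$ in the first slot is pulled out linearly); substituting $\lambda_z^{-1}=\|f\|_{L^2(A)}^{-2}V_f f(z)$ yields \eqref{eq Vff char}. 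Taking absolute values and using $|V_f f(z)|=\|f\|_{L^2(A)}^2$ for $z\in G$ gives $|V_f f(z+w)|=|V_f f(w)|$, i.e. $|V_f f|$ is constant on each coset $w+G$. I do not expect a genuine obstacle here: the one real idea is the Cauchy--Schwarz reduction of the first paragraph, after which all assertions are forced; the only point requiring care is the bookkeeping of phase factors — a misplaced conjugation (treating $\lambda_z^{-1}$ in the first inner-product slot as conjugate-linear) would give a spurious conjugate in \eqref{eq Vff char}.
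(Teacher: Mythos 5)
Your proposal is correct and follows essentially the same route as the paper: the key step in both is the equality case of Cauchy--Schwarz, which turns every $z\in G$ into a generalized eigenvector condition $\pi(z)f=\lambda_z f$, after which the subgroup, isotropy and second-degree-character properties are forced. The only difference is cosmetic: you manipulate the operators $\pi(z)$ directly via the cocycle identity $\pi(z)\pi(w)=\overline{\langle x,\eta\rangle}\,\pi(z+w)$ and the commutation relation \eqref{eq 2.2}, whereas the paper routes the same computation through the covariance formula \eqref{eq 2.3} for the short-time Fourier transform.
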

\begin{proof}
Without loss of generality we can suppose $\|f\|_{L^2(A)}=1$, hence $V_f f(0)=\|f\|_{L^2(A)}^2=1$.

Since $V_f f$ is a continuous function which vanishes at infinity (Proposition \ref{pro 2.1}), $G$ is compact.

 For $z\in G$ we have 
\[
|\langle f,\pi(z) f\rangle|=1,
\]
and therefore 
\[
\pi(z)f=c(z) f
\]
for some $c(z)\in\mathbb{C}$, $|c(z)|=1$. 
As a consequence, 
\[
V_f(\pi(z)f)f(w)=c(z) V_f f(w)
\]
for $w\in\AhA$. Setting $z=(x,\xi)$ and $w=(y,\eta)$, by \eqref{eq 2.3} we have 
\[
\langle x,\xi\rangle\overline{\langle x,\eta\rangle} V_f f(y-x,\eta-\xi)=c(x,\xi) V_f f(y,\eta). 
\]
Setting $y=0, \eta=0$, we obtain 
\[
\langle x,\xi\rangle V_f f(-x,-\xi)=c(x,\xi), 
\]
which gives 
\begin{equation}\label{eq Vff charbis}
         V_f f(y-x,\eta-\xi)= V_f f(-x,-\xi) V_f f(y,\eta) \langle x,\eta\rangle
        \end{equation}
for $(x,\xi)\in G$, $(y,\eta)\in\AhA$.

On the other hand, it is clear from the very definition \eqref{eq vgf2} of $V_f f$, that $|V_f f(-w)|=|V_f f(w)|$ for $w\in\AhA$, and therefore if $(x,\xi)\in G$ then $(-x,-\xi)\in G$ as well. 

Hence we obtain
\begin{equation}\label{eq Vff charter}
         V_f f(y+x,\eta+\xi)= V_f f(x,\xi) V_f f(y,\eta) \overline{\langle x,\eta\rangle}
        \end{equation}
for $(x,\xi)\in G$, $(y,\eta)\in\AhA$, which proves  \eqref{eq Vff char}.

By  \eqref{eq Vff charter}, if $(x,\xi), (y,\eta)\in G$ then $(x+y,\xi+\eta)\in G$, hence $G$ is a subgroup of $\AhA$ ($(0,0)\in G$, of course). Finally exchanging the roles of $(x,\xi)$ and $(y,\eta)$ in \eqref{eq Vff charter} yields that $G$ is isotropic. 
\end{proof}
\begin{remark} For $A=\R^d$ we recapture the well-known {\rm radar correlation estimate} \cite[Lemma 4.2.1]{grochenig_book}: if $f\in L^2(\R^d)\setminus\{0\}$ and $z\in\R^{2d}$, $z\not=0$, then $|V_f f(z)|<V_f f(0)$. 
\end{remark}

We now introduce the notion of {\it subcharacter of second degree}. This terminology -- in fact non-standard -- is inspired by the definition of subcharacter \cite[Definition 43.3]{hewitt70}, that is a character of a compact open subgroup $H\subset A$, extended by $0$ on $A\setminus H$. 

\begin{definition}\label{def 3.6}
Let $H\subset A$ be a compact open subgroup and let $\phi:H\to\widehat{H}$ be  a continuous symmetric homomorphism. 
 A function $h:A\to\mathbb{C}$ is said {\rm subcharacter of second degree} associated to $(H,\phi)$ if its restriction $h|_H$ is a character of second degree of $H$ associated to $\phi$ and $h(x)=0$ for $x\in A\setminus H$. 
\end{definition}
The following result provides the ambiguity function of a subcharacter of second degree. 
\begin{proposition}\label{pro 4}
 Let $h:A\to\mathbb{C}$ be a subcharacter of second degree associated to $(H,\phi)$ (cf. Definition \ref{def 3.6}); hence $H\subset A$ is a compact open subgroup and $\phi:H\to\widehat{H}$ is a continuous symmetric homomorphism.  Then  
\begin{equation}\label{eq 4.2}
V_h h(x,\xi)=|H|\overline{h(-x)}\chi_G(x,\xi)\qquad (x,\xi)\in\AhA,
\end{equation}
where $G$ is the maximal compact open isotropic subgroup of $\AhA$ corresponding to the triple $(H,H^\bot,\phi)$ (cf. Propositions \ref{pro 1}, \ref{pro 2} and \ref{pro 3}).

Moreover the function $|H|^{-1}V_h h$ is a subcharacter of second degree of $\AhA$ associated to pair $(G,\phi')$, where $\phi':G\to\widehat{G}$ is the continuous symmetric homomorphism in \eqref{eq phi'}. 
\end{proposition}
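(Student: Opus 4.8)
The plan is to compute the ambiguity function $V_h h$ directly from the integral representation \eqref{eq vgf2}, using the functional equation of a character of second degree to collapse the integral to a Haar-orthogonality relation on $H$; the ``moreover'' part will then follow from Proposition \ref{pro 4.1}.

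First I would note that, since $h$ is supported on $H$, the integrand $\overline{\langle y,\xi\rangle}\,h(y)\,\overline{h(y-x)}$ defining $V_h h(x,\xi)$ is supported in $\{y\in H:\ y-x\in H\}$, which is empty when $x\notin H$ and equals $H$ when $x\in H$; this already gives $V_h h(x,\xi)=0=|H|\,\overline{h(-x)}\,\chi_G(x,\xi)$ for $x\notin H$. For $x\in H$ I would use the identity $h(y)=h(y-x)\,h(x)\,\langle y-x,\phi(x)\rangle$ coming from Definition \ref{def charsecond} (applied to the decomposition $y=(y-x)+x$) together with $|h|\equiv 1$ on $H$ to rewrite $h(y)\,\overline{h(y-x)}=h(x)\,\langle y-x,\phi(x)\rangle$. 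Expanding $\langle y-x,\phi(x)\rangle=\langle y,\phi(x)\rangle\,\overline{\langle x,\phi(x)\rangle}$ by bilinearity pulls the $y$-independent factor $h(x)\,\overline{\langle x,\phi(x)\rangle}$ out of the integral, leaving $\int_H \langle y,\phi(x)-\xi\rangle\,dy$, which equals $|H|$ when $\xi$ and $\phi(x)$ agree on $H$ and $0$ otherwise. By the description of $G$ via the triple $(H,H^\bot,\phi)$ (Propositions \ref{pro 1}, \ref{pro 2}, \ref{pro 3}), the condition ``$x\in H$ and $\xi\in\phi(x)$'' is precisely ``$(x,\xi)\in G$'', so this integral equals $|H|\,\chi_G(x,\xi)$.

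At this point $V_h h(x,\xi)=|H|\,h(x)\,\overline{\langle x,\phi(x)\rangle}\,\chi_G(x,\xi)$, and the remaining ingredient of \eqref{eq 4.2} is the algebraic identity $h(x)\,\overline{\langle x,\phi(x)\rangle}=\overline{h(-x)}$ for $x\in H$, which I would obtain by applying the second-degree relation to $0=x+(-x)$, using $h(0)=1$ (since $h(0)=h(0)^2$ and $h(0)\in\mathbb{T}$) and $\phi(-x)=-\phi(x)$. For the ``moreover'' part I would observe that $V_h h(0)=\|h\|_{L^2(A)}^2=\int_H|h|^2=|H|$ and, by \eqref{eq 4.2}, $|V_h h|=|H|\,\chi_G$; hence $G$ is exactly the set where $|V_h h|$ attains its maximum value, so Proposition \ref{pro 4.1} applied to $f=h$ identifies the restriction of $|H|^{-1}V_h h$ to $G$ as a character of second degree of $G$ associated to the symmetric homomorphism $\phi'$ of \eqref{eq phi'}. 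Since $|H|^{-1}V_h h$ vanishes off $G$, Definition \ref{def 3.6} then gives that it is a subcharacter of second degree of $\AhA$ associated to $(G,\phi')$. (Alternatively, one notes that $x\mapsto h(-x)$ is again a character of second degree of $H$ associated to $\phi$ and applies Proposition \ref{lem} directly.)

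The argument is essentially bookkeeping, so no step is a genuine obstacle; the two places where a little care is needed are the identification of the coset condition ``$\xi\in\phi(x)$'' with membership in $G$, and the chain of conjugations that turns $h(x)\,\overline{\langle x,\phi(x)\rangle}$ into $\overline{h(-x)}$.
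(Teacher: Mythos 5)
Your proof is correct and follows essentially the same route as the paper: reduce to $x\in H$ by the support condition, use the second-degree functional equation to turn the integrand into a character of $H$, and invoke orthogonality of characters on the compact open subgroup $H$ to identify the coset condition $\xi\in\phi(x)$ with $(x,\xi)\in G$. The only cosmetic differences are that the paper factors out $\overline{h(-x)}$ directly via $h(y-x)=h(y)h(-x)\overline{\langle x,\phi(y)\rangle}$ (so it never needs your auxiliary identity $h(x)\overline{\langle x,\phi(x)\rangle}=\overline{h(-x)}$), and for the ``moreover'' part it uses Proposition \ref{lem} applied to $x\mapsto h(-x)$ --- the alternative you mention at the end --- rather than Proposition \ref{pro 4.1}; both justifications are valid.
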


\begin{proof}
Since $h|_H$ is a character of second degree of $H$ associated to $\phi$, for $x,y\in H$ we have
\begin{equation}\label{eq 4.3}
h(y-x)=h(y)h(-x)\overline{\langle x,\phi(y)\rangle}. 
\end{equation}
We now compute $V_h h$. Observe that, if $x\in A\setminus H$, $\xi\in\hA$, 
\[
V_h h(x,\xi)=\langle h, M_\xi T_x h\rangle_{L^2(A)}=0,\]
because $H\cap (x+H)=\emptyset $ in that case.

On the other hand, if $x\in H$, $\xi\in\hA$, by \eqref{eq 4.3}, 
\begin{align*}
V_h h(x,\xi)&=\int_H \overline{\langle y,\xi\rangle} h(y)\overline{h(y-x)} \, dx\\
&= \overline{h(-x)}\int_H\overline{\langle y,\xi\rangle} |h(y)|^2 \langle y,\phi(x)\rangle\, dy\\
&= \overline{h(-x)} \int_H\langle y,\eta-\xi\rangle\, dy,
\end{align*}
where $\eta$ is any element of the coset $\phi(x)\subset\hA$ (recall $\phi:H\to\widehat{H}\simeq \hA/H^\bot$). Now, in the last integral the measure $dy$ can be regarded as a Haar measure of the compact {\it open} subgroup $H$. Therefore, the integral does not vanish if and only if $\xi$ and $\eta$ induce the same character of $H$ (\cite[Lemma 23.19]{hewitt63}), namely if $\eta-\xi\in H^\bot$, that is $\xi\in \phi(x)$. This proves \eqref{eq 4.2}. 

From \eqref{eq 4.2} we see that
\[
V_h h(x,\xi)= |H|\, \overline{h(-x)} \qquad (x,\xi)\in G.
\]
It is easy to see that the function $h(-x)$ for $x\in H$, is still a character of second degree of $H$ associated to the same homomorphism $\phi$. Hence, it follows from Proposition \ref{lem} that the function $|H|^{-1} V_h h(x,\xi)$, restricted to $G$, that is $\overline{h(-x)}$, is a character of second degree associated to the homomorphism $\phi'$ in \eqref{eq phi'}.
\end{proof}
We can now state the main result of this section.
\begin{theorem}\label{mainthm 1}
    Let $f\in L^2(A)$ and let $G=\{z\in\AhA:\ V_f f(z)\not=0\}$. The following statements are equivalent:
    \begin{itemize}
        \item[(a)] $|G|=1$.

        \par\medskip

        \item[(b)] $G$ is a maximal compact open isotropic subgroup of $\AhA$. \par\medskip
        \item[(c)]
         There exist $c\in\mathbb{C}\setminus\{0\}$, $x_0\in A$ and a subcharacter $h$ of second degree of $A$ such that $f=c T_{x_0} h$.
\end{itemize}
If one of the above condition is satisfied, the function $\|f\|_{L^2(A)}^{-2} V_f f$ is a subcharater of second degree of $\AhA$ associated to $(G,\phi')$, where $\phi':G\to\widehat{G}$ is given in \eqref{eq phi'}.
\end{theorem}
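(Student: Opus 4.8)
The plan is to prove the cycle of implications $(a)\Rightarrow(b)\Rightarrow(c)\Rightarrow(a)$ and then to read off the last assertion from the proof of $(c)\Rightarrow(a)$. For $(a)\Rightarrow(b)$ I would argue that, if $|G|=1$, Theorem \ref{thm 2.2} (applied with $g=f$) forces $G$ to be compact and open with $|V_f f|=\|f\|_{L^2(A)}^2\chi_G$; hence $G$ coincides with the set $\{z:|V_f f(z)|=V_f f(0)\}$ of Proposition \ref{pro 4.1}, which in turn tells us that $G$ is a compact isotropic subgroup and that $\|f\|_{L^2(A)}^{-2}V_f f|_G$ is a character of second degree associated with the symmetric homomorphism $\phi'$ of \eqref{eq phi'}. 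Being open with $|G|=1$, $G$ is then a maximal compact open isotropic subgroup by Proposition \ref{pro 3}.

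The core of the argument is $(b)\Rightarrow(c)$. Let $(H,H^\bot,\phi)$ be the triple associated with $G$ (Propositions \ref{pro 1}, \ref{pro 2}, \ref{pro 3}), and choose, via Theorem \ref{thm reiter}, a subcharacter $h$ of second degree of $A$ associated with $(H,\phi)$. Proposition \ref{pro 4} gives $V_h h=|H|\,\overline{h(-\cdot)}\,\chi_G$ and shows that $|H|^{-1}V_h h|_G$ is a character of second degree associated with $\phi'$. On the other hand $|G|=1$ (Proposition \ref{pro 3}), so the reasoning of $(a)\Rightarrow(b)$ shows that $\|f\|_{L^2(A)}^{-2}V_f f|_G$ is likewise a character of second degree associated with $\phi'$, and $V_f f$ vanishes off $G$. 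By the uniqueness clause of Theorem \ref{thm reiter} the two characters of second degree differ by a character $\psi$ of $G$; since $G$ is a closed subgroup of $\AhA$, $\psi$ extends to a character of $\AhA$, and every character of $\AhA$ has the form $(y,\eta)\mapsto\langle y,\xi_0\rangle\overline{\langle x_0,\eta\rangle}$ for some $x_0\in A$, $\xi_0\in\hA$ --- which, by \eqref{eq 2.4}, is exactly the factor relating $V_h h$ to $V_{\pi(x_0,\xi_0)h}(\pi(x_0,\xi_0)h)$. Consequently $V_f f=V_g g$ (both sides vanish off $G$) with $g:=\lambda\,\pi(x_0,\xi_0)h$ and $|\lambda|^2=\|f\|_{L^2(A)}^2/|H|$, so that $\|g\|_{L^2(A)}=\|f\|_{L^2(A)}$; Proposition \ref{pro 2.3} then yields $f=cg$ with $|c|=1$. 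To land on the exact form in $(c)$, I would finally rewrite $\pi(x_0,\xi_0)h=M_{\xi_0}T_{x_0}h=\langle x_0,\xi_0\rangle\,T_{x_0}(M_{\xi_0}h)$ and check that $M_{\xi_0}h$ is still a subcharacter of second degree of $A$ associated with $(H,\phi)$ --- its restriction to $H$ being the product of the character $y\mapsto\langle y,\xi_0\rangle$ of $H$ with the character of second degree $h|_H$ --- so that $f=c'\,T_{x_0}(M_{\xi_0}h)$ as desired.

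For $(c)\Rightarrow(a)$ together with the last assertion, suppose $f=cT_{x_0}h$ with $c\neq0$ and $h$ a subcharacter of second degree associated with $(H,\phi)$. By \eqref{eq 2.4}, $V_f f(y,\eta)=|c|^2\,\overline{\langle x_0,\eta\rangle}\,V_h h(y,\eta)$, while Proposition \ref{pro 4} gives $V_h h=|H|\,\overline{h(-\cdot)}\,\chi_G$ with $G$ the maximal compact open isotropic subgroup attached to $(H,H^\bot,\phi)$; hence $\{z:V_f f(z)\neq0\}=G$ and $|G|=1$ by Proposition \ref{pro 3}. Since moreover $\|f\|_{L^2(A)}^2=|c|^2|H|$, we obtain $\|f\|_{L^2(A)}^{-2}V_f f=|H|^{-1}\,\overline{\langle x_0,\cdot\rangle}\,V_h h$ on $\AhA$, which is the product of the subcharacter of second degree $|H|^{-1}V_h h$ of $\AhA$ associated with $(G,\phi')$ (Proposition \ref{pro 4}) with the character $(y,\eta)\mapsto\overline{\langle x_0,\eta\rangle}$ of $\AhA$; by Theorem \ref{thm reiter} it is therefore again a subcharacter of second degree of $\AhA$ associated with $(G,\phi')$.

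I expect the main obstacle to be $(b)\Rightarrow(c)$, and within it the elimination of the residual character $\psi$ of $G$: one has to recognise $\psi$ as the restriction to $G$ of a time-frequency-shift character of $\AhA$ (combining the description of $\widehat{\AhA}$ with the extension of characters from the closed subgroup $G$), and then show that the modulation $M_{\xi_0}$ that comes along can be pushed inside $h$ without leaving the class of subcharacters of second degree. Once $V_f f$ has been matched with $V_g g$ for an explicit $g$, the identification $f=cg$ via Proposition \ref{pro 2.3} is routine.
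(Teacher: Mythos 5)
Your proposal is correct and follows essentially the same route as the paper: Theorem \ref{thm 2.2} plus Propositions \ref{pro 4.1} and \ref{pro 3} for (a)$\Rightarrow$(b); then for (b)$\Rightarrow$(c) the comparison of $\|f\|_{L^2(A)}^{-2}V_f f|_G$ with $|H|^{-1}V_h h|_G$ as characters of second degree associated to $\phi'$, the extension of the residual character of $G$ to a character of $\AhA$ absorbed via \eqref{eq 2.4} into a time-frequency shift of $h$, and Proposition \ref{pro 2.3} to conclude $f=c\,\pi(x_0,\xi_0)h$, with the modulation finally pushed into $h$. The only differences are cosmetic (you spell out (c)$\Rightarrow$(a) and the final assertion, which the paper dispatches by citing Propositions \ref{pro 4} and \ref{pro 4.1}).
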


\begin{proof}
We can assume, without loss of generality, that $\|f\|_{L^2(A)}=1$.

(a) $\Rightarrow$ (b) By Theorem \ref{thm 2.2} we have $|V_f f|=\chi_G$, and $G$ is a compact open subset of $\AhA$. By Proposition \ref{pro 4.1}, $G$ is an isotropic subgroup of $\AhA$. Since $|G|=1$, it is maximal by Proposition \ref{pro 3}. \par\medskip
(b) $\Rightarrow$ (c) 
 By Proposition \ref{pro 3} we have that $|G|=1$ and therefore $|V_f f|=\chi_G$ by Theorem \ref{thm 2.2}. In fact, by Proposition \ref{pro 4.1}, the restriction of $V_f f$ to $G$ is a character of second degree of $\AhA$ associated to the homomorphism $\phi':G\to\widehat{G}$ in \eqref{eq phi'}. 

Let now $(H,H^\bot,\phi)$ be the triple associated to $G$ (cf. Propositions \ref{pro 1}, \ref{pro 2} and \ref{pro 3}), and let $h$ be a subcharacter of second degree of $A$ associated to $(H,\phi)$ (cf. Definition \ref{def 3.6}), which exists by Theorem \ref{thm reiter}. We know from Proposition \ref{pro 4} that the function $|H|^{-1} V_h h$, restricted to $G$, is a character of second degree associated to the same homomorphism $\phi'$ as above. Hence by Theorem \ref{thm reiter} there exists a character $g$ of $G$ such that 
\[
  V_f f(x,\xi) =g(x,\xi) |H|^{-1} V_h h(x,\xi)\qquad (x,\xi)\in G. 
\]
The character $g$ extends to a character of $\AhA$ (\cite[Corollary 24.12]{hewitt63}) and therefore there exist $y\in A$, $\eta\in\hA$ such that 
\[
g(x,\xi)=\langle x,\eta\rangle\overline{\langle y,\xi\rangle}.  
\]
We deduce that 
\[
V_f f(x,\xi) =|H|^{-1} \langle x,\eta\rangle\overline{\langle y,\xi\rangle}  V_h h(x,\xi)\qquad (x,\xi)\in G. 
\]
In fact this formula holds for every $(x,\xi)\in \AhA$ because both sides vanish on $\AhA\setminus G$, by  \eqref{eq 4.2} and the fact that $|V_f f|=\chi_G$. By a comparison with \eqref{eq 2.4} we deduce that 
\[
V_f f=|H|^{-1}V_{\pi(y,\eta)h}(\pi(y,\eta)h).
\]
By Proposition \ref{pro 2.3} we obtain 
\[
f=c|H|^{-1/2}\pi(y,\eta)h
\]
for some $c\in\mathbb{C}$, $|c|=1$. 

Setting $h':=M_\eta h$, we have $f=c' |H|^{-1/2} T_y h'$, $|c'|=1$, and $h'$ is a subcharacter of second degree associated to $(H,\phi)$, which gives the desired conclusion. \par\medskip
(c) $\Rightarrow$ (a) This is clear by Propositions \ref{pro 4} and \ref{pro 3}. \par\medskip

The last part of the statement is also clear by Proposition \ref{pro 4.1}.
\end{proof}

\section{Optimizers for the short-time Fourier transform}
In this section  we identify the functions $f,g\in L^2(A)$ such that 
\[
|\{z\in\AhA:\ V_g f(z)\not=0\}|=1.
\]
The following result will reduce the problem to the case $f=g$, that we addressed in the previous section. 
\begin{proposition}\label{pro 5.1}
    Let $f,g\in L^2(A)$, with $\|f\|_{L^2(A)}=\|g\|_{L^2(A)}=1$. Let $S=\{z\in\AhA:\ |V_g f(z)|=1\}$ and $G= \{z\in\AhA:\ |V_g g(z)|=1\}$. Let $z_0\in S$. Then $f=c\pi(z_0)g$ for some $c\in\mathbb{C}$, $|c|=1$, and $S=z_0+G$.
\end{proposition}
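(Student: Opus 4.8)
The plan is to first exploit the hypothesis that $|V_g f|$ attains the maximal value $1$ at the point $z_0$, and deduce via the Cauchy–Schwarz equality case that $f$ and $\pi(z_0)g$ are proportional. Indeed, writing $z_0=(x_0,\xi_0)$, the equality $|\langle f,\pi(z_0)g\rangle|=\|f\|_{L^2(A)}\|\pi(z_0)g\|_{L^2(A)}=1$ forces $f=c\,\pi(z_0)g$ for some unimodular constant $c$. This is exactly the equality case in \eqref{eq cs}, and it is the only place where the specific point $z_0$ is used. Once this identity is established, everything else reduces to the previously studied ambiguity function $V_g g$.

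The second step is to transfer the set $S$ to the set $G$ by the covariance relation \eqref{eq 2.3}. Since $f=c\,\pi(z_0)g$ with $|c|=1$, for every $w=(y,\eta)\in\AhA$ we have
\[
|V_g f(w)|=|V_g(\pi(z_0)g)(w)|=|\langle x_0,\xi_0\rangle\overline{\langle x_0,\eta\rangle}\,V_g g(y-x_0,\eta-\xi_0)|=|V_g g(w-z_0)|,
\]
using \eqref{eq 2.3} and the fact that the scalar prefactor is unimodular. Therefore $|V_g f(w)|=1$ if and only if $|V_g g(w-z_0)|=1$, i.e. $w\in S$ if and only if $w-z_0\in G$; this is precisely the assertion $S=z_0+G$.

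There is no serious obstacle here: the argument is a direct combination of the equality case in the Cauchy–Schwarz inequality \eqref{eq cs} with the covariance formula \eqref{eq 2.3}. If anything merits care, it is keeping track of the unimodular scalar factors — both the constant $c$ arising from proportionality and the phase $\langle x_0,\xi_0\rangle\overline{\langle x_0,\eta\rangle}$ coming from \eqref{eq 2.3} — but since only the absolute value of $V_g f$ enters the definitions of $S$ and $G$, these phases are harmless. The role of this proposition is organizational: it shows that the study of the optimizers for $V_g f$ with $f\neq g$ contains no new phenomena beyond a translation in phase space, so that Theorem \ref{mainthm 1} can be invoked to obtain the full classification in the general case, which will be carried out in the subsequent theorem (Theorem \ref{mainthm 2}).
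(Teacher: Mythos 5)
Your argument is correct and coincides with the paper's own proof: the equality case of the Cauchy--Schwarz bound \eqref{eq cs} at $z_0$ gives $f=c\,\pi(z_0)g$ with $|c|=1$, and then the covariance relation \eqref{eq 2.3} (equivalently, the direct computation $|\langle\pi(z_0)g,\pi(z)g\rangle|=|V_g g(z-z_0)|$ used in the paper) yields $S=z_0+G$. No gaps.
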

\begin{proof}
Since $|\langle f,\pi(z_0)g\rangle| = |V_g f(z_0)|=1$, we have $f=c\pi(z_0)g$ for some $c\in\mathbb{C}$, $|c|=1$. Hence, if $z\in A$, 
\[
|V_gf(z)|=|\langle \pi(z_0)g,\pi(z)g\rangle| =| V_g g(z-z_0)|,
\]
which implies $S=z_0+G$. 
\end{proof}
We therefore obtain the following characterization.
\begin{theorem}\label{mainthm 2} 
    Let $f,g\in L^2(A)$ and let $S=\{z\in\AhA:\ V_g f(z)\not=0\}$. The following statements are equivalent:
    \begin{itemize}
        \item[(a)] $|S|=1$.

        \par\medskip

        \item[(b)] $S$ is a coset in $\AhA$ of a maximal compact open isotropic subgroup. \par\medskip
        \item[(c)]
         There exist $c_1,c_2\in\mathbb{C}\setminus\{0\}$, $z_1,z_2\in \AhA$ and a subcharacter $h$ of second degree of $A$ such that $f=c_1\pi(z_1) h$ and $g=c_2\pi(z_2)h$.
\end{itemize} 
\end{theorem}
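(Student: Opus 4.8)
The plan is to reduce the statement to the one-window case of Theorem~\ref{mainthm 1}, establishing the cycle (a) $\Rightarrow$ (c) $\Rightarrow$ (b) $\Rightarrow$ (a); the only tools needed, besides Theorem~\ref{mainthm 1}, are the equality case of \eqref{eq cs} (Proposition~\ref{pro 5.1}) and the covariance formulas \eqref{eq 2.3}--\eqref{eq 2.4}.

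\emph{(a) $\Rightarrow$ (c).} Since $|S|=1\neq 0$ we have $f,g\neq 0$, and Theorem~\ref{thm 2.2} gives $|V_g f|=\|f\|_{L^2(A)}\|g\|_{L^2(A)}\,\chi_S$; in particular $S\neq\emptyset$ and it coincides with the set where $|V_g f|$ attains its maximum. Picking $z_0\in S$, the equality case of \eqref{eq cs} forces $f=c\,\pi(z_0)g$ for some $c\in\mathbb{C}\setminus\{0\}$ (cf. Proposition~\ref{pro 5.1}). Plugging this into \eqref{eq 2.3} one finds $|V_g f(z)|=|c|\,|V_g g(z-z_0)|$, hence $S=z_0+\{z:V_g g(z)\neq 0\}$ and so $|\{z:V_g g(z)\neq 0\}|=|S|=1$. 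Now Theorem~\ref{mainthm 1}, applied with window $g$, yields $g=c_2\,T_a h$ for some $c_2\in\mathbb{C}\setminus\{0\}$, $a\in A$ and a subcharacter $h$ of second degree of $A$. Setting $z_2=(a,0)$ and $z_1=z_0+z_2$, and using that $\pi(z_0)\pi(z_2)$ coincides with $\pi(z_1)$ up to a unimodular factor (immediate from \eqref{eq transl}), we get $g=c_2\,\pi(z_2)h$ and $f=c\,\pi(z_0)g=c_1\,\pi(z_1)h$ for some $c_1\in\mathbb{C}\setminus\{0\}$, which is (c).

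\emph{(c) $\Rightarrow$ (b), and (b) $\Rightarrow$ (a).} Let $h$ be a subcharacter of second degree associated to a pair $(H,\phi)$ and let $G\subset\AhA$ be the maximal compact open isotropic subgroup attached to the triple $(H,H^\bot,\phi)$ (cf. Propositions~\ref{pro 1}, \ref{pro 2} and \ref{pro 3}). From $g=c_2\,\pi(z_2)h$ and \eqref{eq 2.4} we get $|V_g g|=|c_2|^2\,|V_h h|$, so $\{z:V_g g(z)\neq 0\}=\{z:V_h h(z)\neq 0\}$; by formula \eqref{eq 4.2} of Proposition~\ref{pro 4} the latter set is exactly $G$, since $\overline{h(-x)}\neq 0$ precisely for $x\in H$, which holds for every $(x,\xi)\in G$. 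On the other hand $f=c_1\,\pi(z_1)h=c\,\pi(z_1-z_2)g$ for some $c\in\mathbb{C}\setminus\{0\}$, so \eqref{eq 2.3} gives $|V_g f(z)|=|c|\,|V_g g(z-(z_1-z_2))|$; hence $S=(z_1-z_2)+G$ is a coset of the maximal compact open isotropic subgroup $G$, proving (b). Finally, if $S$ is a coset of a maximal compact open isotropic subgroup $G'$, then $|S|=|G'|=1$ by Proposition~\ref{pro 3}, which is (a).

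\emph{On the difficulty.} There is no essentially new obstacle here: the theorem is a repackaging of Theorem~\ref{mainthm 1}, once the problem for $V_g f$ has been reduced, via Proposition~\ref{pro 5.1}, to that for $V_g g$ (equivalently for $V_h h$). The only mildly delicate point is the bookkeeping -- tracking the unimodular cocycle factors and the translations while transporting the support statement from $V_g f$ to $V_g g$ and then to $V_h h$ -- which the covariance identities \eqref{eq 2.3}--\eqref{eq 2.4} take care of mechanically.
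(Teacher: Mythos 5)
Your proof is correct and follows essentially the same route as the paper, which simply notes that the result "easily follows from Theorem \ref{thm 2.2}, Proposition \ref{pro 5.1} and Theorem \ref{mainthm 1}"; you have filled in exactly that reduction (equality in the Cauchy--Schwarz bound to pass from $V_g f$ to $V_g g$, then Theorem \ref{mainthm 1} for the single-window case, with the covariance identities handling the translations and phases). No gaps.
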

\begin{proof}
    The result easily follows from Theorem \ref{thm 2.2}, Proposition \ref{pro 5.1} and Theorem \ref{mainthm 1}. 
\end{proof}

\section{maximally localized Gabor orthonormal bases}
We recall that a Gabor orthonormal basis of $L^2(A)$ is an orthonormal basis of the form $ \mathcal{G}(f,\Gamma)$ (cf. \eqref{eq ort}), where $f\in L^2(A)$ and $\Gamma$ is a (not necessarily countable) subset of $\AhA$. 

The following result characterizes the Gabor orthonormal bases with $f$ maximally localized, in the sense that  the subset 
\[
G_f:=\{z\in \AhA:\ V_f f(z)\not=0\}
\]
has measure $1$. Observe that, by \eqref{eq 2.4},  $G_f=G_{\pi(w)f}$ for every $w\in\AhA$, so that all the elements of the basis are then maximally localized.  

\begin{theorem}\label{mainthm 3}
Let $f\in L^2(A)$, $\|f\|_{L^2(A)}=1$, with $|G_f|=1$; hence $G_f$ is a maximal compact open isotropic subgroup of $\AhA$ (by Theorem \ref{mainthm 1}). Let $\Gamma\subset\AhA$.

$\mathcal{G}(f,\Gamma)$ is an orthonormal basis of $L^2(A)$ if and only if $\Gamma$ is a set of representatives of the cosets of $G_f$ in $\AhA$. 
 \end{theorem}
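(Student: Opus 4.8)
The plan is to exploit the key fact (from Proposition~\ref{pro 4}, via Theorem~\ref{mainthm 1}) that $|V_f f|=\chi_{G_f}$, i.e. $V_f f$ is supported exactly on the maximal compact open isotropic subgroup $G:=G_f$, and that $V_f(\pi(w)f)$ is supported on $w+G$ (by the covariance identity \eqref{eq 2.3}, since $|V_f(\pi(x,\xi)f)(y,\eta)|=|V_f f(y-x,\eta-\xi)|$). The orthonormality of $\mathcal{G}(f,\Gamma)$ is then almost tautological: for $z,w\in\Gamma$, $\langle\pi(z)f,\pi(w)f\rangle = \overline{\langle w,\ldots\rangle}\,V_f f(z-w)$ up to a unimodular factor, which vanishes iff $z-w\notin G$. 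So orthonormality of the family (recalling $\|f\|=1$) is equivalent to: distinct elements of $\Gamma$ lie in distinct cosets of $G$. It remains to characterize when, additionally, the family is \emph{complete}.

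**First I would** handle the ``if'' direction. Suppose $\Gamma$ is a full set of coset representatives of $G$ in $\AhA$. Orthonormality holds by the above. For completeness, I would use the Parseval/energy identity \eqref{eq energy}: for $h\in L^2(A)$ with $\|h\|=1$,
\[
\sum_{z\in\Gamma}|\langle h,\pi(z)f\rangle|^2=\sum_{z\in\Gamma}\int_{z+G}|V_f h(w)|^2\,dw
\]
--- wait, this is not immediate since $V_f h$ need not be supported on $z+G$. The correct route is the standard orthonormal-basis criterion via the analysis/synthesis machinery of $V_f$: since $\|f\|=1$, $V_f:L^2(A)\to L^2(\AhA)$ is an isometry onto a closed subspace, and $\mathcal{G}(f,\Gamma)$ is an ONB of $L^2(A)$ iff $\{V_f(\pi(z)f):z\in\Gamma\}$ is an ONB of the range $V_f(L^2(A))$. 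I would instead argue directly that the orthogonal complement of $\mathcal{G}(f,\Gamma)$ is trivial: if $h\perp\pi(z)f$ for all $z\in\Gamma$, then $V_f h$ vanishes at every point of $\Gamma$, hence — using that $V_f h(w)=\langle h,\pi(w)f\rangle$ and the reproducing/covariance structure — one shows $V_f h$ vanishes on all of $\bigcup_{z\in\Gamma}(z+G)=\AhA$ (since $V_f h$ restricted to each coset $z+G$ is, up to a unimodular factor from \eqref{eq 2.3}, a multiple of the character-of-second-degree $V_f f$ on $G$ — this is the content that makes ``$V_f h$ vanishes at $z$'' upgrade to ``$V_f h$ vanishes on $z+G$''). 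Then $\|h\|=\|V_f h\|=0$.

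**The hard part will be** precisely this upgrade step, i.e. showing that $V_f h|_{z+G}$ is determined by its value at the single point $z$. I expect to establish, for $h\in L^2(A)$ and $u=(x_0,\xi_0)\in\AhA$, $v=(x,\xi)\in G$, an identity of the shape
\[
V_f h(u+v)=\lambda(u,v)\,V_f h(u)\,\quad\text{whenever }V_f h(u)\neq 0,
\]
with $\lambda$ unimodular; this comes from expanding $\langle h,\pi(u+v)f\rangle$, inserting $\pi(u+v)=\sigma(\cdot)\,\pi(v)\pi(u)$ from \eqref{eq 2.2}, and using that $\pi(v)f=c(v)f$ for $v\in G$ (the relation $|V_f f(v)|=1$ forces $\pi(v)f=c(v)f$, $|c(v)|=1$, exactly as in the proof of Proposition~\ref{pro 4.1}). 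Concretely $\langle h,\pi(u+v)f\rangle = \overline{\sigma(v,u)}\,\langle h,\pi(u)\pi(v)f\rangle=\overline{\sigma(v,u)}\,\overline{c(v)}\,\langle h,\pi(u)f\rangle$, which gives the claim with $\lambda(u,v)=\overline{\sigma(v,u)}\,\overline{c(v)}$ — note this even shows $V_f h(u)=0\Rightarrow V_f h(u+v)=0$, so the support of $V_f h$ is a union of $G$-cosets, which is all that is really needed. With this, ``$V_f h$ vanishes on $\Gamma$'' plus ``$\Gamma$ meets every coset'' yields $V_f h\equiv 0$, proving completeness.

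**For the ``only if'' direction**, suppose $\mathcal{G}(f,\Gamma)$ is an ONB. Orthonormality already forces the elements of $\Gamma$ to lie in pairwise distinct cosets of $G$ (two elements in the same coset would give a nonzero inner product). If some coset $u+G$ were missed entirely, pick any $h:=\pi(u)f$; then for every $z\in\Gamma$, $z-u\notin G$ (since $z$ lies in a different coset), so $\langle\pi(u)f,\pi(z)f\rangle=0$ up to a unimodular factor times $V_f f(z-u)=0$, i.e. $h\perp\mathcal{G}(f,\Gamma)$ while $h\neq 0$ — contradicting completeness. Hence $\Gamma$ meets every coset exactly once, i.e. is a set of coset representatives. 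I would close by remarking that, combined with Theorem~\ref{mainthm 1} (which identifies the $f$ with $|G_f|=1$ as constant multiples of translates of subcharacters of second degree), Theorem~\ref{mainthm 3} immediately yields the stated Corollary~\ref{mainthm 4}.
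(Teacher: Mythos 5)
Your proposal is correct and follows essentially the same route as the paper: orthogonality from $|\langle\pi(z)f,\pi(w)f\rangle|=|V_ff(z-w)|=\chi_{G_f}(z-w)$, completeness from the fact that $\pi(v)f=c(v)f$ with $|c(v)|=1$ for $v\in G_f$ (so that $\pi(z)f$ is a unimodular multiple of $\pi(w)f$ whenever $z-w\in G_f$, which is exactly your ``upgrade'' identity read through $V_fh(u+v)=\lambda(u,v)V_fh(u)$), together with injectivity of $V_f$; and the converse by exhibiting $\pi(z_0)f$ as a nonzero vector orthogonal to the whole family when a coset is missed. The only cosmetic difference is that the paper phrases completeness via equality of spans over $\Gamma$ and over all of $A\times\widehat{A}$ rather than via triviality of the orthogonal complement, which is the same argument in dual form.
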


 \begin{proof}
We know from Theorem \ref{mainthm 1} that $|V_f f|=  \chi_{G_f}$.

  Let $\Gamma$ be a set of representatives of the cosets of $G_f$ in $\AhA$.  
  Since \[
  |\langle \pi(z) f,\pi(w)f\rangle|=|V_f f(w-z)|,
  \]
  we see that  $\pi(z)f$ and $\pi(w)f$ are orthogonal if $z,w\in\Gamma$, $z\not=w$, because $\Gamma$ contains at most (in fact exactly) one element of each coset of $G_f$. 
  
  Let us verify that the set $\mathcal{G}(f,\Gamma)$  is also complete. We claim that 
\[
{\rm span}\big(\{\pi(z)f:\ z\in\Gamma\}\big)= {\rm span}\big(\{\pi(z)f:\ z\in\AhA\}\big).
\]
To see this, observe that if $z\in\AhA$ there exists $w\in\Gamma$ such that $z-w\in G_f$, hence $|V_f f(z-w)|=1$, which means that $\pi(z)f=c \pi(w)f$ for some $c\in\mathbb{C}$, $|c|=1$, which yields the claim. 

Now, the set $\{\pi(z)f:\ z\in\AhA\}$ is clearly complete, because if $g\in L^2(A)$ and $\langle g,\pi(z)f\rangle_{L^2(A)}=V_f g(z)=0$ for every $z\in\AhA$ then $g=0$, being the short-time Fourier transform injective (cf. \eqref{eq energy}).

Vice versa, suppose that $\mathcal{G}(f,\Gamma)$ is an orthonormal basis. If $z,w\in\Gamma$, $z\not=w$, since $\pi(z) f$ and $\pi(w)f$ are orthogonal, we have $V_f f(z-w)=0$, namely $z-w\not \in G$, i.e. $z$ and $w$ belong to different cosets.  Moreover if $\Gamma$ did not contain any element of some coset $z_0+G$, then the function $\pi(z_0)f$ would be orthogonal to all the functions $\pi(z)f$, with $z\in\Gamma$, which is impossible, since $\mathcal{G}(f,\Gamma)$  is a complete set by assumption. 
 \end{proof}
 Combining Theorems \ref{mainthm 1} and \ref{mainthm 3} we deduce the desired characterization of the maximally localized Gabor orthonormal basis. 
 \begin{corollary}\label{mainthm 4}
     Let $f\in L^2(A)$, $\|f\|_{L^2(A)}=1$, and $\Gamma\subset \AhA$. 
     
    Then $\mathcal{G}(f,
    \Gamma)$ is an orthonormal basis of $L^2(A)$ and $|G_f|=1$ if and only if $f=cT_{x_0} h$ for some  $c\in\mathbb{C}\setminus\{0\}$, $x_0\in A$ and some subcharacter $h$ of second degree of $A$,
      and $\Gamma$ is a set of representatives of the cosets of $G_f$ in $\AhA$. 
 \end{corollary}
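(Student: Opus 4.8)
The plan is to derive the corollary purely by combining Theorems \ref{mainthm 1} and \ref{mainthm 3}, treating the two implications separately; no ingredient beyond those two theorems is needed, so what follows is essentially a bookkeeping argument.

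For the direction ``$\Leftarrow$'' I would start from the assumptions $f=cT_{x_0}h$ with $c\in\mathbb{C}\setminus\{0\}$, $x_0\in A$, $h$ a subcharacter of second degree of $A$, and $\Gamma$ a set of representatives of the cosets of $G_f$ in $\AhA$. First invoke the implication (c) $\Rightarrow$ (a) of Theorem \ref{mainthm 1} to get $|G_f|=1$, and then (a) $\Rightarrow$ (b) of the same theorem to conclude that $G_f$ is a maximal compact open isotropic subgroup of $\AhA$. At this point the hypotheses of Theorem \ref{mainthm 3} are met, and its ``if'' part yields that $\mathcal{G}(f,\Gamma)$ is an orthonormal basis of $L^2(A)$.

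For the direction ``$\Rightarrow$'' I would assume that $\mathcal{G}(f,\Gamma)$ is an orthonormal basis and that $|G_f|=1$. The implication (a) $\Rightarrow$ (c) of Theorem \ref{mainthm 1} immediately produces $f=cT_{x_0}h$ for some $c\in\mathbb{C}$, $x_0\in A$ and a subcharacter $h$ of second degree of $A$; the normalization $\|f\|_{L^2(A)}=1$ forces $c\neq0$ (in fact $|c|=|H|^{-1/2}$, where $H$ is the compact open subgroup underlying $h$, since $\|h\|_{L^2(A)}^2=|H|$). Since $|G_f|=1$, $G_f$ is a maximal compact open isotropic subgroup (Theorem \ref{mainthm 1} again), so Theorem \ref{mainthm 3} applies and its ``only if'' part forces $\Gamma$ to be a set of representatives of the cosets of $G_f$ in $\AhA$.

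The only point that needs a line of care — which is hardly an obstacle — is the consistency of the condition on $\Gamma$ between the two statements: Theorem \ref{mainthm 3} is phrased in terms of the subgroup $G_f$, and one should note that passing from $f$ to $cT_{x_0}h$, or more generally to any $\pi(w)f$, does not alter this subgroup. This is exactly the observation $G_f=G_{\pi(w)f}$ recorded before Theorem \ref{mainthm 3} by means of the covariance identity \eqref{eq 2.4}. With that remark in place, the two occurrences of ``$\Gamma$ is a set of representatives of the cosets of $G_f$'' refer to the same family of cosets, and the assembly above is complete.
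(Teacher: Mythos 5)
Your proposal is correct and follows exactly the paper's route: the paper proves this corollary simply by combining Theorems \ref{mainthm 1} and \ref{mainthm 3}, which is precisely the bookkeeping you carry out (your two directions and the remark on $G_f=G_{\pi(w)f}$ just make that combination explicit). Nothing further is needed.
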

 \begin{remark}\label{rem 6.3}
Observe that, in Corollary \ref{mainthm 4}, the sets $\{V_f(\pi(z)f)\not=0\}=z+G_f$, $z\in\Gamma$ (cf. \ref{eq 2.3}), define a tiling of $\AhA$ and $|z+G_f|=|G_f|=1$. Vice versa, if $G\subset\AhA$ is a maximal compact open isotropic subgroup (hence $|G|=1$), $(H,H^\bot,\phi)$ is the triple associated to $G$ (cf. Proposition \ref{pro 3}) and $h$ is a subcharacter associated to $(H,\phi)$, the function $f=|H|^{-1/2}h$ generates a Gabor orthonormal basis corresponding to the tiling generated by $G$.  
 \end{remark}

\begin{example}
Let $N\geq 1$ be an integer and let  $\mathbb{Z}_N=\mathbb{Z}/N\mathbb{Z}_N$ be the cyclic group of order $N$, equipped with the counting measure. As Haar measure on the dual group, we coherently choose the counting measure multiplied by $N^{-1}$. 

On $\mathbb{Z}_N$ a subcharacter of second degree has the form $h=M_\xi h_{b,p}$, where $\xi\in \widehat{\mathbb{Z}}_N$, $b\geq1$ is a divisor of $N$, $p\in\{0,\ldots,b-1\}$, and,  
\[
h_{b,p}(x)=\begin{cases}
 \exp\big(\frac{\pi i p x^2 b(1+b)}{N^2} \big)& x\in a\mathbb{Z}_N\\
0& x\in  \mathbb{Z}_N\setminus a\mathbb{Z}_N,
\end{cases}
\]
where $a=N/b$ (see \cite[Remark 2.1]{nicola23} and \cite[Section 3 (iii)] {feichtinger2}). 
We also have 
\[
G_h=\{(ma,nb+mp):\ m=0,\ldots,b-1,\ n=0,\ldots,a-1\}, 
\]
(see the proof of \cite[Theorem 1.2]{nicola23}), which is indeed a subgroup of $\mathbb{Z}_N\times \widehat{\mathbb{Z}}_N$ of cardinality $N$, hence of measure $1$ (incidentally, all the subgroups of cardinality $N$ have this form), and Corollary \ref{mainthm 4} applies.  

Of course, on $\mathbb{Z}_N$ there are Gabor orthonormal bases $\mathcal{G}(f,
\Gamma)$ which are not maximally localized, e.g.  we can take $f=2^{-1/2}\chi_{\{0,1\}}$ and $\Gamma=2\mathbb{Z}_N\times (N/2)\mathbb{Z}_N$, assuming $N\geq 4$ even. A straightforward computation shows that 
\[
G_f=(\{0,1,N-1\}\times\mathbb{Z}_N)\setminus\{(0,N/2)\},
\]
so that $|G_f|=3-1/N>1$. 
\end{example}
We also obtain the following result for finite Abelian groups.

\begin{corollary}
Let $A$ be a finite Abelian group and $S\subset \AhA$. The following statements are equivalent, for the family of operators $\{\pi(z):\ z\in S\}$:
\begin{itemize}
    \item[(a)] There exists a common eigenfunction. \par\medskip
    \item[(b)] The operators $\pi(z)$, $z\in S$, commute.  \par\medskip
    \item[(c)] There is a Gabor orthonormal basis, which consists of common eigenfunctions, generated by a function $f\in L^2(A)$, with $|G_f|=1$.   
\end{itemize}   
\end{corollary}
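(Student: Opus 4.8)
We outline a proof; the strategy is to establish the cycle (a) $\Rightarrow$ (b) $\Rightarrow$ (c) $\Rightarrow$ (a). Throughout one uses that, since $A$ is finite, $\AhA$ is finite and \emph{every} subgroup of $\AhA$ is compact and open; moreover a subset of $\AhA$ has measure $1$ precisely when it has cardinality $|A|$. All the structural facts about compact open isotropic subgroups (Propositions \ref{pro 1}, \ref{pro 2}, \ref{pro 3}, Corollary \ref{rem max}) and about optimizers (Theorems \ref{mainthm 1}, \ref{mainthm 3}, Propositions \ref{pro 4.1}, \ref{pro 4}) are therefore available verbatim.

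For (a) $\Rightarrow$ (b) the plan is short. Suppose $f\in L^2(A)\setminus\{0\}$ satisfies $\pi(z)f=\lambda(z)f$ for every $z\in S$, with $\lambda(z)\in\mathbb{C}$ necessarily of modulus $1$ (as $\pi(z)$ is unitary). For $z,z'\in S$, the commutation relation \eqref{eq 2.2} reads $\pi(z)\pi(z')=\sigma(z,z')\,\pi(z')\pi(z)$ (cf.\ \eqref{defsigma}); applying both sides to $f$ gives $\lambda(z)\lambda(z')f=\sigma(z,z')\lambda(z')\lambda(z)f$, hence $\sigma(z,z')=1$, and then \eqref{eq 2.2} again yields $\pi(z)\pi(z')=\pi(z')\pi(z)$. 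So the existence of a single common eigenfunction already forces the whole family to commute.

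For (b) $\Rightarrow$ (c) I would first invoke elementary linear algebra: a commuting family of unitary operators on the finite-dimensional space $L^2(A)$ is simultaneously diagonalizable, so it admits a common eigenfunction $f$, which we normalize to $\|f\|_{L^2(A)}=1$. Then $|V_ff(z)|=|\langle f,\pi(z)f\rangle|=1=V_ff(0)$ for every $z\in S$, so $S$ is contained in $G=\{z\in\AhA:|V_ff(z)|=V_ff(0)\}$, which by Proposition \ref{pro 4.1} is a (compact open) isotropic subgroup of $\AhA$. By Corollary \ref{rem max}, $G$ sits inside a maximal compact open isotropic subgroup $G'$, with $|G'|=1$ by Proposition \ref{pro 3}; let $(H,H^\bot,\phi)$ be its associated triple and let $h$ be a subcharacter of second degree of $A$ associated to $(H,\phi)$ (it exists by Theorem \ref{thm reiter}). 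Set $f'=|H|^{-1/2}h$, so $\|f'\|_{L^2(A)}=1$ and, by Proposition \ref{pro 4}, $G_{f'}=\{z:V_{f'}f'(z)\neq0\}=G'$, hence $|G_{f'}|=1$. Picking $\Gamma$ to be a set of representatives of the cosets of $G'$ in $\AhA$, Theorem \ref{mainthm 3} shows that $\mathcal{G}(f',\Gamma)$ is an orthonormal basis. Finally, since $|V_{f'}f'|=\chi_{G'}$ (Theorem \ref{mainthm 1}) and $S\subset G'=G_{f'}$, equality in \eqref{eq cs} forces $\pi(z)f'=c(z)f'$ with $|c(z)|=1$ for every $z\in S$; then, for any $w\in\Gamma$, \eqref{eq 2.2} gives $\pi(z)\pi(w)f'=\sigma(z,w)\pi(w)\pi(z)f'=\sigma(z,w)c(z)\,\pi(w)f'$, so every basis element $\pi(w)f'$ is a common eigenfunction of $\{\pi(z):z\in S\}$. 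This is exactly (c).

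The implication (c) $\Rightarrow$ (a) is immediate, since an orthonormal basis of $L^2(A)$ (a space of dimension $|A|\geq1$) is nonempty and, by assumption, each of its elements is a common eigenfunction of $\{\pi(z):z\in S\}$. The only genuinely substantive step is (b) $\Rightarrow$ (c): the eigenfunctions produced by plain diagonalization carry no localization information, and the point is to replace them by the maximally localized window $f'$ without losing the eigenfunction property for the whole Gabor system. The key mechanism making this work is that the set where $|V_ff|$ is maximal is \emph{automatically} an isotropic subgroup (Proposition \ref{pro 4.1}), which can be enlarged to a maximal one and equipped with its canonical subcharacter-of-second-degree window; the Heisenberg commutation relation \eqref{eq 2.2} then propagates the eigenfunction property from $f'$ to all time-frequency shifts $\pi(w)f'$.
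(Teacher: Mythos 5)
Your proof is correct and follows essentially the same route as the paper: both hinge on producing a maximal compact open isotropic subgroup containing $S$, equipping it with a subcharacter-of-second-degree window via Theorem \ref{thm reiter}, Proposition \ref{pro 4} and Theorem \ref{mainthm 3}, and propagating the eigenfunction property to all basis elements through \eqref{eq 2.2}. The only (harmless) deviations are that you prove (a) $\Rightarrow$ (b) by a direct computation on the eigenfunction rather than via Proposition \ref{pro 4.1}, and that in (b) $\Rightarrow$ (c) you detour through simultaneous diagonalization of the commuting unitaries, whereas the paper deduces directly from \eqref{eq 2.2} that commutativity forces $\sigma(z,w)=1$, so the subgroup generated by $S$ is already isotropic without producing an eigenfunction first.
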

\begin{proof}
(a)$\Rightarrow$(b) If $f\in L^2(A)$ is a common eigenfunction, with $\|f\|_{L^2(A)}=1$, then $|V_f f(z)|=1$ for $z\in S$, because the eigenvalues of $\pi(z)$ have modulus $1$. Hence $S\subset G':=\{z\in\AhA:\ |V_f f(z)|=1\}$, and $G'$ is an {\it isotropic} subgroup of $\AhA$ by Proposition \ref{pro 4.1}. Hence the operators $\pi(z)$, with $z\in S$, commute by \eqref{eq 2.2}. \par\medskip(b)$\Rightarrow$(c)
Since the operators $\pi(z)$, $z\in S$, commute, the subgroup generated by $S$ is isotropic. It is moreover contained in some maximal isotropic subgroup $G$ (whose existence is obvious, because $A$ is finite; see also Corollary \ref{rem max}). Let $(H,H^\bot,\phi)$ be the triple associated to $G$ (cf. Propositions \ref{pro 1}, \ref{pro 2} and \ref{pro 3}) and let $h$ be a subcharacter of second degree associated to the pair $(H,\phi)$ (cf. Definition \ref{def 3.6}), which exists by Theorem \ref{thm reiter} (see also \cite{kaiblinger09}). By Proposition \ref{pro 4}, for the function $f=|H|^{-1/2}h$ we have $\|f\|_{L^2(A)}=1$ and $|V_f f|=\chi_G$, and by Theorem \ref{mainthm 3} $f$ generates a Gabor orthonormal basis $\mathcal{G}(f,\Gamma)$, for a suitable subset $\Gamma\subset\AhA$. Since $S\subset G$, $|\langle f,\pi(z)f\rangle|=|V_f f(z)|=1$ for $z\in S$, so that $f$ is a common eigenfunction of the operators $\pi(z)$, $z\in S$, and therefore, by \eqref{eq 2.2}, every function $\pi(w) f$, with $w\in \AhA$, is a common eigenfunction too.

\par\medskip(c)$\Rightarrow$(a) This is obvious. 
\end{proof}
We point out that extensive numerical experiments on eigenfunctions of time-frequency shifts were done by H. Feichtinger (personal communication), in connection with the work \cite{kaiblinger}.
\section{Lieb's uncertainty inequality}\label{sec conc}
 The following result was first proved in \cite{lieb} for the group $A=\R$, and then extended to a general LCA group in \cite{grochenig98}, following essentially the same proof.  

 We recall that every locally compact Abelian group $A$ is topologically isomorphic to $\R^d\times A_0$, for some integer $d\geq0$ and some LCA group $A_0$ containing a compact open subgroup, and the dimension $d$ is an invariant \cite[Theorem 24.30]{hewitt63}.

\begin{theorem}[Lieb's uncertainty inequality]\label{thm lieb} For $f,g\in L^2(A)$, we have 
\begin{equation}\label{eq lieb00}
\|V_g f\|_{L^p(\AhA)}\leq \Big(\frac{2}{p}\Big)^{d/p}\|f\|_{L^2(A)}\|g\|_{L^2(A)}\qquad 2\leq p<\infty
\end{equation}
and 
\begin{equation}\label{eq lieb01}
\|V_g f\|_{L^p(\AhA)}\geq \Big(\frac{2}{p}\Big)^{d/p}\|f\|_{L^2(A)}\|g\|_{L^2(A)}\qquad 1\leq p\leq 2.
\end{equation}
\end{theorem}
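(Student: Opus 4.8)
The plan is to run E.~Lieb's original argument \cite{lieb}, adapting the sharp-constant bookkeeping to a general LCA group $A$, as Gr\"ochenig did in \cite{grochenig98}. The structural point is that the factor $(2/p)^{d/p}$ is carried entirely by the Euclidean part of $A$: writing $A\cong\R^{d}\times A_{0}$ with $A_{0}$ containing a compact open subgroup, the invariant $d$ vanishes precisely when $A$ itself has a compact open subgroup, and in that case \eqref{eq lieb00}--\eqref{eq lieb01} collapse to the constant-one estimate, which is elementary. I would therefore first settle that case and then treat the general one by a Hausdorff--Young estimate in the frequency variable followed by a Young estimate in the time variable, whose sharp constants -- multiplicative over the direct product $\R^{d}\times A_{0}$ and trivial on the $A_{0}$-factor -- assemble into exactly $(2/p)^{d/p}$.

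\emph{The case $d=0$.} If $A$ has a compact open subgroup then $(2/p)^{d/p}=1$, and for $2\le p<\infty$ log-convexity of $L^{p}$-norms on $\AhA$ together with \eqref{eq energy} and \eqref{eq cs} gives
\[
\|V_{g}f\|_{L^{p}(\AhA)}\le\|V_{g}f\|_{L^{2}(\AhA)}^{2/p}\|V_{g}f\|_{L^{\infty}(\AhA)}^{1-2/p}\le\|f\|_{L^{2}(A)}\|g\|_{L^{2}(A)},
\]
which is \eqref{eq lieb00}; for $1\le p\le2$ the same two facts yield $\|V_{g}f\|_{L^{2}}\le\|V_{g}f\|_{L^{p}}^{p/2}\|V_{g}f\|_{L^{\infty}}^{1-p/2}$, hence $\|V_{g}f\|_{L^{p}}\ge\|f\|_{L^{2}}\|g\|_{L^{2}}$, which is \eqref{eq lieb01}.

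\emph{The upper bound in general.} Fix $2\le p<\infty$ with conjugate exponent $p'$. For each $x\in A$ the slice $\xi\mapsto V_{g}f(x,\xi)$ is, by \eqref{eq vgf2}, the Fourier transform on $A$ of $h_{x}:=f\cdot\overline{T_{x}g}\in L^{1}(A)$. Applying in $\xi$ the sharp Hausdorff--Young inequality $L^{p'}(A)\to L^{p}(\hA)$ -- whose best constant equals the Babenko--Beckner constant $((p')^{1/p'}p^{-1/p})^{d/2}$ of $\R^{d}$, the $A_{0}$-factor contributing $1$ because on a group with a compact open subgroup the Fourier transform is a norm-one map $L^{p'}\to L^{p}$ (cf. \cite{hewitt70}) -- and integrating in $x$, one bounds $\|V_{g}f\|_{L^{p}(\AhA)}^{p}$ by a constant times $\int_{A}\|h_{x}\|_{L^{p'}(A)}^{p}\,dx$. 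Since $\|h_{x}\|_{L^{p'}(A)}^{p'}=(|f|^{p'}\ast\widetilde{|g|^{p'}})(x)$ with $\tilde u(y)=u(-y)$, this integral is $\||f|^{p'}\ast\widetilde{|g|^{p'}}\|_{L^{p/p'}(A)}^{p/p'}$, and the sharp Young inequality on $A$ with exponents $2/p'$, $2/p'$, $p/p'$ -- again the Beckner--Brascamp--Lieb constant of the $\R^{d}$-factor times $1$ -- bounds it by a constant times $\|f\|_{L^{2}}^{p}\|g\|_{L^{2}}^{p}$. Multiplying out the two sharp constants (this is precisely Lieb's computation; both extremal inequalities are saturated simultaneously by Gaussians on the $\R^{d}$-factor) yields total constant $(2/p)^{d/p}$, i.e. \eqref{eq lieb00}.

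\emph{The lower bound and the main obstacle.} For $1\le p\le2$ I would run the dual argument of \cite{lieb,grochenig98}. It is worth stressing that \eqref{eq lieb01} does \emph{not} follow from \eqref{eq lieb00} by plain H\"older duality: writing $\tfrac12=\tfrac12(\tfrac1p+\tfrac{1}{p'})$ and combining log-convexity with \eqref{eq energy} and \eqref{eq lieb00} only gives $\|V_{g}f\|_{L^{p}}\ge(p'/2)^{d/p'}\|f\|_{L^{2}}\|g\|_{L^{2}}$, and $(p'/2)^{d/p'}<(2/p)^{d/p}$ for $1\le p<2$. Instead one transfers an upper bound for $\|W(f,g)\|_{L^{p'}(\AhA)}$, $p'\ge2$ -- where $W(f,g)$ is the Wigner transform, for which $\|W(f,g)\|_{L^{2}(\AhA)}=\|f\|_{L^{2}}\|g\|_{L^{2}}$ -- into \eqref{eq lieb01} via the fact that $V_{g}f$ is, up to a symplectic change of variables and a unimodular factor, the symplectic Fourier transform of $W(f,g)$; the sharp-constant arithmetic is the same as above. (On groups where halving is not an automorphism this step needs the customary care; alternatively one may simply cite \cite{grochenig98}, where the Euclidean proof carries over verbatim.) The real difficulty throughout is this sharp-constant arithmetic: proving \eqref{eq lieb00}--\eqref{eq lieb01} with \emph{some} constant depending only on $d$ is routine, but identifying the best constant as $(2/p)^{d/p}$ requires the sharp Hausdorff--Young and Young inequalities on $A$ and their multiplicative behaviour under $A\cong\R^{d}\times A_{0}$; and the range $1\le p\le2$ is the harder half, since there naive duality is lossy.
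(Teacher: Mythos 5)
First, a point of comparison: the paper does not prove Theorem \ref{thm lieb} at all -- it states it with the attribution ``first proved in \cite{lieb} for $A=\R$, and then extended to a general LCA group in \cite{grochenig98}'', and the only estimate it actually proves is the constant-one version \eqref{eq lieb1}--\eqref{eq lieb2} of Theorem \ref{mainthm lieb}, by exactly the interpolation you use in your $d=0$ step. So your proposal attempts strictly more than the paper does. Your treatment of the case $d=0$ and your sketch of the upper bound \eqref{eq lieb00} (slice $V_gf(x,\cdot)=\widehat{h_x}$ with $h_x=f\,\overline{T_xg}$, sharp Hausdorff--Young in $\xi$, then sharp Young applied to $|f|^{p'}\ast\widetilde{|g|^{p'}}$ with exponents $2/p',2/p',p/p'$) is indeed the argument of \cite{lieb,grochenig98}, modulo the deferred constant arithmetic and the (standard but worth stating) multiplicativity of the sharp Hausdorff--Young and Young constants over $\R^d\times A_0$.

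There is, however, a genuine gap in your route to the lower bound \eqref{eq lieb01}. You propose to ``transfer an upper bound for $\|W(f,g)\|_{L^{p'}(\AhA)}$, $p'\ge 2$'' into \eqref{eq lieb01} using that $V_gf$ is essentially the symplectic Fourier transform of $W(f,g)$. But Hausdorff--Young only converts this relation into $\|W(f,g)\|_{L^{p'}}\le C\,\|V_gf\|_{L^p}$ for $p\le 2$; to conclude a lower bound on $\|V_gf\|_{L^p}$ you would then need a \emph{lower} bound $\|W(f,g)\|_{L^{p'}}\ge c\,\|f\|_{L^2}\|g\|_{L^2}$ for $p'\ge 2$, and no such bound holds (for $p'>2$ the ratio $\|W(f,f)\|_{L^{p'}}/\|f\|_{L^2}^2$ can be made arbitrarily small; Lieb's theorem gives only the upper bound in that range). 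The transference through the symplectic Fourier transform moves upper bounds to upper bounds, never to lower bounds. The actual proof of \eqref{eq lieb01} in \cite{lieb} runs the \emph{same slicing} as the upper bound but with both ingredients reversed: Hausdorff--Young applied to $\widehat{h_x}$ (with exponent $p\le2$) gives $\|\widehat{h_x}\|_{L^p(\hA)}\ge C_{HY}(p)^{-1}\|h_x\|_{L^{p'}(A)}$, and then one applies the sharp \emph{reverse} Young inequality of Leindler and Brascamp--Lieb for the exponents $2/p',2/p',p/p'$, which are all $\le1$ precisely when $p\le2$ and for which the convolution inequality reverses on nonnegative functions. Your own observation that plain H\"older duality from \eqref{eq lieb00} is lossy is correct and well taken, but the Wigner detour does not repair it; either supply the reverse-Young argument or do what the paper does and simply cite \cite{lieb,grochenig98}.
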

Using only \eqref{eq cs} one easily obtains similar estimates -- in fact weaker, if $d>1$ -- with the constant $\big(\frac{2}{p}\big)^{d/p}$ replaced by $1$ (see Theorem \ref{mainthm lieb}  below), namely
\begin{equation}\label {eq lieb1}
\|V_g f\|_{L^p(\AhA)}\leq \\|f\|_{L^2(A)}\|g\|_{L^2(A)}\qquad 2\leq p<\infty
\end{equation}
and 
\begin{equation}\label{eq lieb2}
\|V_g f\|_{L^p(\AhA)}\geq \|f\|_{L^2(A)}\|g\|_{L^2(A)}\qquad 0< p\leq 2,
\end{equation}
where now the case $0<p<1$ is also included. These estimates are sharp if $A$ contains a compact open subgroup (i.e. $d=0$).

We are going to prove that the pairs of functions $f,g$ for which equality is attained in \eqref{eq lieb1} or \eqref{eq lieb2} are precisely those for which the set where $V_g f\not=0$ has measure $1$, which are characterized in Theorem \ref{mainthm 2}.
\begin{theorem}\label{mainthm lieb}
Let $A$ be any LCA group. Then \eqref{eq lieb1} and \eqref{eq lieb2} hold true. 

Equality holds in \eqref{eq lieb1} for some $p\in (2,\infty)$ and $f,g\in L^2(A)\setminus\{0\}$ if and only if there exist $c_1,c_2\in\mathbb{C}\setminus\{0\}$, $z_1,z_2\in \AhA$ and a subcharacter $h$ of second degree of $A$ such that $f=c_1\pi(z_1) h$ and $g=c_2\pi(z_2)h$. In that case, equality occurs in \eqref{eq lieb1} for every $p\in[2,\infty)$.

A similar uniqueness result holds true for the inequality \eqref{eq lieb2}, for $0<p<2$.
\end{theorem}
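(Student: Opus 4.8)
The plan is to reduce the Lieb-type estimates \eqref{eq lieb1}–\eqref{eq lieb2} and their equality cases to the already-established results, essentially by a direct interpolation-free argument using only the elementary pointwise bound \eqref{eq cs} and the Parseval identity \eqref{eq energy}. First I would normalize $\|f\|_{L^2(A)}=\|g\|_{L^2(A)}=1$ and observe that for $2\le p<\infty$ we can write, using \eqref{eq cs} to bound the excess power of $|V_gf|$,
\[
\|V_g f\|_{L^p(\AhA)}^p=\int_{\AhA}|V_g f|^{p-2}\,|V_g f|^2\,\le\,\Big(\sup_{\AhA}|V_g f|\Big)^{p-2}\int_{\AhA}|V_g f|^2\le 1,
\]
which is \eqref{eq lieb1}; for $0<p\le 2$ the reverse Hölder-type manipulation
\[
1=\int_{\AhA}|V_g f|^2\le\Big(\sup_{\AhA}|V_g f|\Big)^{2-p}\int_{\AhA}|V_g f|^p\le\|V_g f\|_{L^p(\AhA)}^p
\]
gives \eqref{eq lieb2}. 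So the inequalities are immediate.

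Next I would analyze equality. In the chain above, equality in $\|V_g f\|_{L^p}^p\le (\sup|V_gf|)^{p-2}\int|V_gf|^2$ (for $p>2$) forces $|V_g f(z)|\in\{0,\sup|V_gf|\}$ for a.e.\ $z$, i.e.\ $|V_gf|=c\,\chi_S$ with $S=\{V_gf\neq0\}$ and $c=\sup|V_gf|\le 1$ by \eqref{eq cs}; then $\int|V_gf|^2=c^2|S|=1$ together with $c\le1$ gives $|S|\ge1$, while $\|V_gf\|_{L^p}^p=c^p|S|=1$ combined with $c^2|S|=1$ forces $c^{p-2}=1$, hence $c=1$ and $|S|=1$. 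Conversely if $|S|=1$, then by Theorem \ref{thm 2.2} $|V_gf|=\chi_S$ with $S$ compact open, so $\|V_gf\|_{L^p}^p=|S|=1$ for every $p$, giving equality throughout $[2,\infty)$. Thus equality in \eqref{eq lieb1} for one $p\in(2,\infty)$ $\iff$ $|S|=1$ $\iff$ equality for all $p\in[2,\infty)$, and by Theorem \ref{mainthm 2} the latter is equivalent to $f=c_1\pi(z_1)h$, $g=c_2\pi(z_2)h$ for a subcharacter $h$ of second degree. The case $0<p<2$ is handled by the symmetric argument: equality in the reverse chain forces $|V_gf|=c\chi_S$ and then $c=1$, $|S|=1$, again invoking Theorem \ref{mainthm 2}; I would just remark that one must be slightly careful that $\|V_gf\|_{L^p}<\infty$, which holds because $V_gf\in L^2\cap C_0$ (Proposition \ref{pro 2.1}), so $|V_gf|^p\le|V_gf|^2$ where $|V_gf|\le1$ and the integral is finite.

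The only genuine subtlety — and the step I would treat most carefully — is the use of the supremum: \eqref{eq cs} gives $\sup_{\AhA}|V_gf|\le 1$, but in the equality analysis I also need that this supremum is actually attained, or at least that the essential supremum equals $1$ in the equality case. This is where continuity of $V_gf$ (Proposition \ref{pro 2.1}) is essential: once we know $|V_gf|=c\,\chi_S$ a.e.\ with $S$ of positive measure and $V_gf$ continuous, $S$ is open and $|V_gf|$ is genuinely constant $=c$ on $S$, so no measure-zero pathology can occur. In fact the cleanest route is to cite Theorem \ref{thm 2.2} directly: in the equality case we deduce $|V_gf|=c\chi_S$ for a.e.\ $z$, and since the only constraint missing is $c=1$, I extract it from the two normalization identities $c^2|S|=1$ and $c^p|S|=1$ as above. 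I do not expect any further obstacle; the whole argument is a short exercise in Hölder's inequality combined with the structural Theorem \ref{mainthm 2}.

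\begin{proof}
We may assume $\|f\|_{L^2(A)}=\|g\|_{L^2(A)}=1$; we must then prove the inequalities with right-hand side $1$ and characterize equality. By Proposition \ref{pro 2.1}, $V_g f\in C_0(\AhA)$, and by \eqref{eq cs}, $\|V_g f\|_{L^\infty(\AhA)}\le 1$. In particular, for $0<p\le 2$ we have $|V_g f|^p\le |V_g f|^2$ pointwise (wherever $|V_g f|\le 1$), so $V_g f\in L^p(\AhA)$ by \eqref{eq energy}.

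\emph{Proof of \eqref{eq lieb1}.} For $2\le p<\infty$,
\[
\|V_g f\|_{L^p(\AhA)}^p=\int_{\AhA}|V_g f|^{p-2}|V_g f|^2\le \|V_g f\|_{L^\infty(\AhA)}^{p-2}\int_{\AhA}|V_g f|^2\le 1,
\]
using \eqref{eq cs} and \eqref{eq energy}.

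\emph{Proof of \eqref{eq lieb2}.} For $0<p\le 2$, similarly,
\[
1=\int_{\AhA}|V_g f|^2=\int_{\AhA}|V_g f|^{2-p}|V_g f|^p\le \|V_g f\|_{L^\infty(\AhA)}^{2-p}\int_{\AhA}|V_g f|^p\le \|V_g f\|_{L^p(\AhA)}^p.
\]

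\emph{Equality in \eqref{eq lieb1}.} Suppose equality holds for some $p\in(2,\infty)$, and set $S=\{z\in\AhA:\ V_g f(z)\neq0\}$ and $M=\|V_g f\|_{L^\infty(\AhA)}$. Equality in the first estimate above forces $|V_g f(z)|\in\{0,M\}$ for a.e.\ $z$; since $V_g f$ is continuous, $S$ is open and $|V_g f|=M\chi_S$ on $\AhA$. From \eqref{eq energy} we get $M^2|S|=1$, and the assumed equality gives $M^p|S|=1$; dividing, $M^{p-2}=1$, so $M=1$ and $|S|=1$. Conversely, if $|S|=1$ then by Theorem \ref{thm 2.2} we have $|V_g f|=\chi_S$ with $S$ compact and open, hence $\|V_g f\|_{L^q(\AhA)}^q=|S|=1$ for every $q\in[2,\infty)$, i.e.\ equality holds in \eqref{eq lieb1} for all such $q$. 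Thus equality in \eqref{eq lieb1} for some $p\in(2,\infty)$ is equivalent to $|S|=1$, which by Theorem \ref{mainthm 2} is equivalent to the existence of $c_1,c_2\in\mathbb{C}\setminus\{0\}$, $z_1,z_2\in\AhA$ and a subcharacter $h$ of second degree of $A$ with $f=c_1\pi(z_1)h$, $g=c_2\pi(z_2)h$; and in that case equality holds for every $p\in[2,\infty)$.

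\emph{Equality in \eqref{eq lieb2}.} Suppose equality holds for some $p\in(0,2)$. With $S$, $M$ as above, equality in the estimate for \eqref{eq lieb2} forces $|V_g f(z)|\in\{0,M\}$ for a.e.\ $z$, hence $|V_g f|=M\chi_S$ by continuity of $V_g f$. Again $M^2|S|=1$ and $M^p|S|=1$ yield $M=1$ and $|S|=1$. The converse and the passage to Theorem \ref{mainthm 2} are identical to the previous case.
\end{proof}
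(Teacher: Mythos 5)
Your argument is correct and is essentially the paper's own proof: both rest on the pointwise bound \eqref{eq cs} together with \eqref{eq energy}, and both reduce the equality case to $|V_gf|=\chi_S$ with $|S|=1$ and then invoke Theorem \ref{mainthm 2}. One small slip: your preliminary claim that $|V_gf|^p\le|V_gf|^2$ for $0<p\le 2$ (hence $V_gf\in L^p$) is backwards --- for $0\le t\le 1$ one has $t^p\ge t^2$ when $p\le 2$, and indeed $L^2\cap C_0\not\subset L^p$ for $p<2$ in general; fortunately this is harmless, since \eqref{eq lieb2} is a lower bound and holds trivially when $\|V_gf\|_{L^p}=\infty$, while in the equality case finiteness is part of the hypothesis.
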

\begin{proof}
 We can suppose $\|f\|_{L^2(A)}=\|g\|_{L^2(A)}=1$. 
 
 Let $2\leq p<\infty$ and set $S=\{z\in\AhA:\ V_g f(z)\not=0\}$.  Using \eqref{eq cs} and \eqref{eq energy} we see that 
\begin{align*}
\int_{S} |V_g f(x,\xi)|^p\, dx\, d\xi &= \int_{S}  |V_g f(x,\xi)|^{p-2} |V_g f(x,\xi)|^2\, dx\, d\xi\\
&\leq \int_{S}  |V_g f(x,\xi)|^2\, dx\, d\xi=1, 
\end{align*}
which proves \eqref{eq lieb1}. If equality occurs in the above inequality and $2<p<\infty$ then $|V_g f|=\chi_S$ and $|S|=1$, which implies the desired conclusion for the functions $f$ and $g$ by Theorem \ref{mainthm 2}.

The result for the inequality \eqref{eq lieb2}, hence $0<p\leq 2$, is analogous, using 
\begin{align*}
1=\int_{S} |V_g f(x,\xi)|^2\, dx\, d\xi &= \int_{S}  |V_g f(x,\xi)|^{2-p} |V_g f(x,\xi)|^p\, dx\, d\xi\\
&\leq \int_{S}  |V_g f(x,\xi)|^p\, dx\, d\xi.
\end{align*}
\end{proof}
\begin{remark}\label{rem pto0} 
If $f,g\in L^2(A)$, we have $V_g f\in L^\infty(\AhA)$ by \eqref{eq cs}. Hence, by monotone convergence, 
\[
\lim_{p\to 0^+}\int_{\AhA} |V_g f(x,\xi)|^p\, dx d\xi= |\{z\in\AhA:\ V_g f(z)\not=0\}|. 
\]
As a consequence, raising to the power $p$ both sides of \eqref{eq lieb2} and taking the limit as $p\to 0^+$,  we obtain that, if $f$ and $g$ are non-zero, 
\[
|\{z\in\AhA:\ V_g f(z)\not=0\}|\geq 1,
\]
that is the inequality in Theorem \ref{thm 2.2}. 
\end{remark}
It is easy to check that, on a general LCA group $A=\R^d\times A_0$, for $f_1,g_1\in L^2(\R^d)$ and $f_2,g_2\in L^2(A_0)$, we have 
\[
V_{g_1\otimes g_2}(f_1\otimes f_2)= V_{g_1} f_1 \otimes V_{g_2} f_2. 
\]
As a consequence, for fixed $p\in [1,\infty)$, if $f_1,g_1$ is a pair of optimizers for Lieb's $L^p$-inequality in $\R^d$ (Theorem \ref{thm lieb}) and similarly for $f_2,g_2$ on $A_0$, then $f_1\otimes f_2,g_1\otimes g_2$ is a pair of optimizers  for the Lieb's $L^p$-inequality on $A$. 
We now show a family of such optimizers. To this end, we need some terminology, inspired by \cite{lieb}. 

\begin{definition}
A function $f$ on $\R^d$ a said a Gaussian if 
\[
f(x)=\exp(-\alpha x\cdot x+i\beta x\cdot x+\gamma\cdot x+\delta)
\]
where $\alpha$ is a real symmetric positive definite $d\times d$ matrix, $\beta$ is a real symmetric $d\times d$ matrix, $\gamma\in\mathbb{C}^d$ and $\delta\in\mathbb{C}$. 
Two functions $f,g$ are called a {\it matched Gaussian pair} if $f$ and $g$ are both Gaussians with the same $\alpha$'s and $\beta$'s but with possibly different $\gamma$'s and $\delta$'s.

Similarly, a pair of functions $f,g$ on a LCA group $A$ is said a {\it matched pair of subcharacters of second degree} if $f=c_1\pi(z_1) h$ and $g=c_2\pi(z_2)h$ for some $c_1,c_2\in\mathbb{C}\setminus\{0\}$, $z_1,z_2\in\AhA$ and some subcharacter $h$ of second degree of $A$. 

\end{definition} 
It is easy to check that for a matched Gaussian pair $f,g$, equality occurs in \eqref{eq lieb00} and \eqref{eq lieb01} ($A=\R^d$). For $A=\R$ it was proved in \cite{lieb} that these are in fact the only pairs of non-zero optimizers if $p\not=2$.

 The previous discussion therefore leads  to the following result.  
\begin{proposition}\label{pro 7.3}
    Let $f_1,g_1$ be a matched Gaussian pair on $\R^d$ and let $f_2,g_2$ be a matched  pair of subcharacters of second degree on $A_0$. Then, for the functions $f:=f_1\otimes f_2$ and $g:=g_1\otimes g_2$ on $A=\R^d\times A_0$, equality occurs in \eqref{eq lieb00} and \eqref{eq lieb01} for every $p\in [1,\infty)$.  
\end{proposition}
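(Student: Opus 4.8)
The plan is to exploit the tensor identity $V_{g_1\otimes g_2}(f_1\otimes f_2)=V_{g_1}f_1\otimes V_{g_2}f_2$ recalled just before the statement, which reduces the assertion to proving equality in Lieb's inequality separately on the two factors $\R^d$ and $A_0$, and then multiplying the resulting identities.

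First I would treat the Euclidean factor. Since $f_1,g_1$ is a matched Gaussian pair, a direct computation shows that $V_{g_1}f_1$ is again a Gaussian and that $|V_{g_1}f_1(x,\xi)|$ is an (unnormalized) Gaussian on $\rdd$ whose $L^p(\rdd)$-norm equals exactly $(2/p)^{d/p}\|f_1\|_{L^2(\rd)}\|g_1\|_{L^2(\rd)}$ for every $p\in[1,\infty)$; this is the elementary fact recorded in \cite{lieb} and already alluded to in the paragraph preceding the statement, so I would simply invoke it. Hence equality holds in \eqref{eq lieb00}--\eqref{eq lieb01} for the pair $f_1,g_1$ on $\R^d$.

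Next the factor $A_0$. Because $A_0$ contains a compact open subgroup, its Euclidean dimension is $0$, so on $A_0$ the inequalities \eqref{eq lieb00}--\eqref{eq lieb01} are exactly \eqref{eq lieb1}--\eqref{eq lieb2} (with constant $1$). By definition a matched pair of subcharacters of second degree is a pair $f_2=c_1\pi(z_1)h$, $g_2=c_2\pi(z_2)h$ with $h$ a subcharacter of second degree of $A_0$, which is precisely condition (c) of Theorem \ref{mainthm 2}; hence the set $S_2:=\{z\in A_0\times\widehat{A_0}:\ V_{g_2}f_2(z)\neq0\}$ has measure $1$, and Theorem \ref{thm 2.2} gives $|V_{g_2}f_2|=\|f_2\|_{L^2(A_0)}\|g_2\|_{L^2(A_0)}\,\chi_{S_2}$. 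Consequently $\|V_{g_2}f_2\|_{L^p(A_0\times\widehat{A_0})}=\|f_2\|_{L^2(A_0)}\|g_2\|_{L^2(A_0)}$ for every $p\in(0,\infty)$ (the case $p=2$ being Parseval), that is, equality in \eqref{eq lieb1}--\eqref{eq lieb2}.

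Finally I would put the pieces together: since $|V_gf|^p=|V_{g_1}f_1|^p\otimes|V_{g_2}f_2|^p$ is a nonnegative product, Tonelli's theorem yields
\[
\|V_gf\|_{L^p(\AhA)}=\|V_{g_1}f_1\|_{L^p(\rdd)}\,\|V_{g_2}f_2\|_{L^p(A_0\times\widehat{A_0})}\qquad p\in[1,\infty),
\]
and combining this with the two computations above, together with $\|f\|_{L^2(A)}=\|f_1\|_{L^2(\rd)}\|f_2\|_{L^2(A_0)}$ and the analogous identity for $g$, gives $\|V_gf\|_{L^p(\AhA)}=(2/p)^{d/p}\|f\|_{L^2(A)}\|g\|_{L^2(A)}$, where $d$ is the (invariant) Euclidean dimension of $A=\R^d\times A_0$ -- exactly equality in \eqref{eq lieb00}--\eqref{eq lieb01}. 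The argument is essentially bookkeeping; the only genuine new input is the explicit Gaussian computation on $\R^d$, and the only points needing care are integrability (which is immediate: $V_{g_2}f_2$ is bounded with support of finite measure, so it lies in every $L^p$ with $p>0$, and $V_{g_1}f_1$ is a Gaussian, so it lies in every $L^p$ with $p\ge1$, so Tonelli's theorem applies with no term infinite) and matching the constant $(2/p)^{d/p}$ with the dimension invariant $d$. I expect no real obstacle here.
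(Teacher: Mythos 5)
Your proof is correct and follows essentially the same route as the paper: the tensor identity $V_{g_1\otimes g_2}(f_1\otimes f_2)=V_{g_1}f_1\otimes V_{g_2}f_2$, the known Gaussian equality case on $\R^d$ from \cite{lieb}, and the subcharacter equality case on $A_0$ via Theorems \ref{thm 2.2} and \ref{mainthm 2} (equivalently Theorem \ref{mainthm lieb}). The paper leaves exactly this assembly to the reader, so nothing further is needed.
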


The optimizers where $f_1$ and $g_1$ are time-frequency shifts of the Gaussian $\exp(-\pi |x|^2)$, and $f_2$ and $g_2$ are time-frequency shifts of the characteristic function of some compact open subgroup of $A$, were already known from \cite{grochenig98}. 

We postpone to a subsequent work the problem of identifying all the optimizers on a general LCA group -- as already observed, the case $A=\R$ was already addressed in \cite{lieb}, whereas the case $A=A_0$ is the content of Theorem \ref{mainthm lieb} above.  

\section*{Acknowledgments}
We would like to thank Hans Feichtinger for enlightening discussions.


\begin{thebibliography}{10}

\bibitem{abreu}
L.~D. Abreu and M.~Speckbacher.
\newblock Donoho-{L}ogan large sieve principles for modulation and polyanalytic
  {F}ock spaces.
\newblock {\em Bull. Sci. Math.}, 171:Paper No. 103032, 25, 2021.

\bibitem{agora}
E.~Agora, J.~Antezana, and M.~N. Kolountzakis.
\newblock Tiling functions and {G}abor orthonormal basis.
\newblock {\em Appl. Comput. Harmon. Anal.}, 48(1):96--122, 2020.

\bibitem{baggett73}
L.~Baggett and A.~Kleppner.
\newblock Multiplier representations of abelian groups.
\newblock {\em J. Functional Analysis}, 14:299--324, 1973.

\bibitem{benedetto95}
J.~J. Benedetto, C.~Heil, and D.~F. Walnut.
\newblock Differentiation and the {B}alian-{L}ow theorem.
\newblock {\em J. Fourier Anal. Appl.}, 1(4):355--402, 1995.

\bibitem{bonami_demange_jaming}
A.~Bonami, B.~Demange, and P.~Jaming.
\newblock Hermite functions and uncertainty principles for the {F}ourier and
  the windowed {F}ourier transforms.
\newblock {\em Rev. Mat. Iberoamericana}, 19(1):23--55, 2003.

\bibitem{bonami}
A.~Bonami and S.~Ghobber.
\newblock Equality cases for the uncertainty principle in finite {A}belian
  groups.
\newblock {\em Acta Sci. Math. (Szeged)}, 79(3-4):507--528, 2013.

\bibitem{casazza99}
P.~G. Casazza, O.~Christensen, and A.~J. E.~M. Janssen.
\newblock Classifying tight {W}eyl-{H}eisenberg frames.
\newblock In {\em The functional and harmonic analysis of wavelets and frames
  ({S}an {A}ntonio, {TX}, 1999)}, volume 247 of {\em Contemp. Math.}, pages
  131--148. Amer. Math. Soc., Providence, RI, 1999.

\bibitem{casazza06}
P.~G. Casazza and M.~Fickus.
\newblock Fourier transforms of finite chirps.
\newblock {\em EURASIP J. Appl. Signal Process.}, (Frames and overcomplete
  representations in signal processing, communications, and information
  theory):Art. ID 70204, 7, 2006.

\bibitem{degosson09}
M.~de~Gosson and F.~Luef.
\newblock Symplectic capacities and the geometry of uncertainty: the irruption
  of symplectic topology in classical and quantum mechanics.
\newblock {\em Phys. Rep.}, 484(5):131--179, 2009.

\bibitem{dias22}
N.~C. Dias, F.~Luef, and J.~a.~N. Prata.
\newblock Uncertainty principle via variational calculus on modulation spaces.
\newblock {\em J. Funct. Anal.}, 283(8):Paper No. 109605, 30, 2022.

\bibitem{donoho}
D.~L. Donoho and P.~B. Stark.
\newblock Uncertainty principles and signal recovery.
\newblock {\em SIAM J. Appl. Math.}, 49(3):906--931, 1989.

\bibitem{dutkay14}
D.~E. Dutkay and C.-K. Lai.
\newblock Uniformity of measures with {F}ourier frames.
\newblock {\em Adv. Math.}, 252:684--707, 2014.

\bibitem{enstad}
U.~Enstad, M.~S. Jakobsen, F.~Luef, and T.~Omland.
\newblock Deformations and {B}alian-{L}ow theorems for {G}abor frames on the
  adeles.
\newblock {\em Adv. Math.}, 410(part B):Paper No. 108771, 46, 2022.

\bibitem{enstad19}
U.~B.~R. Enstad, M.~S. Jakobsen, and F.~Luef.
\newblock Time-frequency analysis on the adeles over the rationals.
\newblock {\em C. R. Math. Acad. Sci. Paris}, 357(2):188--199, 2019.

\bibitem{fefferman}
C.~L. Fefferman.
\newblock The uncertainty principle.
\newblock {\em Bull. Amer. Math. Soc. (N.S.)}, 9(2):129--206, 1983.

\bibitem{feichtinger2}
H.~G. Feichtinger, M.~Hazewinkel, N.~Kaiblinger, E.~Matusiak, and M.~Neuhauser.
\newblock Metaplectic operators on {$\Bbb C^n$}.
\newblock {\em Q. J. Math.}, 59(1):15--28, 2008.

\bibitem{feichtinger}
H.~G. Feichtinger, W.~Kozek, and F.~Luef.
\newblock Gabor analysis over finite abelian groups.
\newblock {\em Appl. Comput. Harmon. Anal.}, 26(2):230--248, 2009.

\bibitem{galbis}
C.~Fern\'{a}ndez, A.~Galbis, and J.~Mart\'{i}nez.
\newblock Localization operators and an uncertainty principle for the discrete
  short time {F}ourier transform.
\newblock {\em Abstr. Appl. Anal.}, pages Art. ID 131459, 6, 2014.

\bibitem{follandbook}
G.~B. Folland.
\newblock {\em A course in abstract harmonic analysis}.
\newblock Studies in Advanced Mathematics. CRC Press, Boca Raton, FL, 1995.

\bibitem{fournier77}
J.~J.~F. Fournier.
\newblock Sharpness in {Y}oung's inequality for convolution.
\newblock {\em Pacific J. Math.}, 72(2):383--397, 1977.

\bibitem{frank22}
R.~L. Frank.
\newblock Sharp inequalities for coherent states and their optimizers,
  arXiv:2210.14798, 2022.

\bibitem{gabardo}
J.-P. Gabardo, C.-K. Lai, and Y.~Wang.
\newblock Gabor orthonormal bases generated by the unit cubes.
\newblock {\em J. Funct. Anal.}, 269(5):1515--1538, 2015.

\bibitem{grochenig98}
K.~Gr\"{o}chenig.
\newblock Aspects of {G}abor analysis on locally compact abelian groups.
\newblock In {\em Gabor analysis and algorithms}, Appl. Numer. Harmon. Anal.,
  pages 211--231. Birkh\"{a}user Boston, Boston, MA, 1998.

\bibitem{grochenig_book}
K.~Gr\"{o}chenig.
\newblock {\em Foundations of Time-Frequency Analysis}.
\newblock Bir\-kh\"{a}user Boston, Inc., Boston, MA, 2001.

\bibitem{groechenig03}
K.~Gr\"{o}chenig.
\newblock Uncertainty principles for time-frequency representations.
\newblock In {\em Advances in {G}abor analysis}, Appl. Numer. Harmon. Anal.,
  pages 11--30. Birkh\"{a}user Boston, Boston, MA, 2003.

\bibitem{grochenig13}
K.~Gr\"{o}chenig and E.~Malinnikova.
\newblock Phase space localization of {R}iesz bases for {$L^2(\Bbb{R}^d)$}.
\newblock {\em Rev. Mat. Iberoam.}, 29(1):115--134, 2013.

\bibitem{grochenig18}
K.~Gr\"{o}chenig and D.~Rottensteiner.
\newblock Orthonormal bases in the orbit of square-integrable representations
  of nilpotent {L}ie groups.
\newblock {\em J. Funct. Anal.}, 275(12):3338--3379, 2018.

\bibitem{grochenig07}
K.~Gr\"{o}chenig and T.~Strohmer.
\newblock Pseudodifferential operators on locally compact abelian groups and
  {S}j\"{o}strand's symbol class.
\newblock {\em J. Reine Angew. Math.}, 613:121--146, 2007.

\bibitem{hewitt63}
E.~Hewitt and K.~A. Ross.
\newblock {\em Abstract harmonic analysis. {V}ol. {I}: {S}tructure of
  topological groups. {I}ntegration theory, group representations}.
\newblock Die Grundlehren der mathematischen Wissenschaften, Band 115. Academic
  Press, Inc., Publishers, New York; Springer-Verlag,
  Berlin-G\"{o}ttingen-Heidelberg, 1963.

\bibitem{hewitt70}
E.~Hewitt and K.~A. Ross.
\newblock {\em Abstract harmonic analysis. {V}ol. {II}: {S}tructure and
  analysis for compact groups. {A}nalysis on locally compact {A}belian groups}.
\newblock Die Grundlehren der mathematischen Wissenschaften, Band 152.
  Springer-Verlag, New York-Berlin, 1970.

\bibitem{igusa68}
J.-i. Igusa.
\newblock Harmonic analysis and theta-functions.
\newblock {\em Acta Math.}, 120:187--222, 1968.

\bibitem{iosevich}
A.~Iosevich, M.~Kolountzakis, Y.~Lyubarskii, A.~Mayeli, and J.~Pakianathan.
\newblock On {G}abor orthonormal bases over finite prime fields.
\newblock {\em Bull. Lond. Math. Soc.}, 53(2):380--391, 2021.

\bibitem{kaiblinger}
N.~Kaiblinger.
\newblock Metaplectic representation, eigenfunctions of phase space shifts, and
  {G}elfand-{S}hilov spaces for {LCA} groups, 1999, PhD Thesis, University of
  Vienna.

\bibitem{kaiblinger09}
N.~Kaiblinger and M.~Neuhauser.
\newblock Metaplectic operators for finite abelian groups and {$\Bbb R^d$}.
\newblock {\em Indag. Math. (N.S.)}, 20(2):233--246, 2009.

\bibitem{kalaj}
D.~Kalaj.
\newblock Contraction property of certain classes of
  $\log-\mathcal{M}$-subharmonic functions in the unit ball in $\mathbb{R}^n$,
  arXiv:2207.02054, 2022.

\bibitem{kaniuth}
E.~Kaniuth and G.~Kutyniok.
\newblock Zeros of the {Z}ak transform on locally compact abelian groups.
\newblock {\em Proc. Amer. Math. Soc.}, 126(12):3561--3569, 1998.

\bibitem{knutsen23}
H.~Knutsen.
\newblock A fractal uncertainty principle for the short-time {F}ourier
  transform and {G}abor multipliers.
\newblock {\em Appl. Comput. Harmon. Anal.}, 62:365--389, 2023.

\bibitem{krahmer}
F.~Krahmer, G.~E. Pfander, and P.~Rashkov.
\newblock Uncertainty in time-frequency representations on finite abelian
  groups and applications.
\newblock {\em Appl. Comput. Harmon. Anal.}, 25(2):209--225, 2008.

\bibitem{kulikov}
A.~Kulikov.
\newblock Functionals with extrema at reproducing kernels.
\newblock {\em Geom. Funct. Anal.}, 32(4):938--949, 2022.

\bibitem{kulikov22bis}
A.~Kulikov, F.~Nicola, J.~Ortega-Cerd\`a, and P.~Tilli.
\newblock A monotonicity theorem for subharmonic functions on manifolds,
  arXiv:2212.14008, 2022.

\bibitem{lai}
C.-K. Lai and A.~Mayeli.
\newblock Non-separable lattices, {G}abor orthonormal bases and tilings.
\newblock {\em J. Fourier Anal. Appl.}, 25(6):3075--3103, 2019.

\bibitem{li05}
Y.-Z. Li.
\newblock A note on {G}abor orthonormal bases.
\newblock {\em Proc. Amer. Math. Soc.}, 133(8):2419--2428, 2005.

\bibitem{lieb}
E.~H. Lieb.
\newblock Integral bounds for radar ambiguity functions and {W}igner
  distributions.
\newblock {\em J. Math. Phys.}, 31(3):594--599, 1990.

\bibitem{lieb_book}
E.~H. Lieb and M.~Loss.
\newblock {\em Analysis}.
\newblock American Mathematical Society, Providence, RI, 2001.

\bibitem{lieb21}
E.~H. Lieb and J.~P. Solovej.
\newblock Wehrl-type coherent state entropy inequalities for {$\rm SU(1,1)$}
  and its {$AX+B$} subgroup.
\newblock In {\em Partial differential equations, spectral theory, and
  mathematical physics---the {A}ri {L}aptev anniversary volume}, EMS Ser.
  Congr. Rep., pages 301--314. EMS Press, Berlin, 2021.

\bibitem{liu03}
Y.~Liu and Y.~Wang.
\newblock The uniformity of non-uniform {G}abor bases.
\newblock {\em Adv. Comput. Math.}, 18(2-4):345--355, 2003.

\bibitem{liu01}
Y.~M. Liu.
\newblock A characterization for windowed {F}ourier orthonormal basis with
  compact support.
\newblock {\em Acta Math. Sin. (Engl. Ser.)}, 17(3):501--506, 2001.

\bibitem{matolcsi73}
T.~Matolcsi and J.~Sz\H{u}cs.
\newblock Intersection des mesures spectrales conjugu\'{e}es.
\newblock {\em C. R. Acad. Sci. Paris S\'{e}r. A-B}, 277:A841--A843, 1973.

\bibitem{nicola23}
F.~Nicola.
\newblock The uncertainty principle for the short-time {F}ourier transform on
  finite cyclic groups: {C}ases of equality.
\newblock {\em J. Funct. Anal.}, 284(12):Paper No. 109924, 2023.

\bibitem{romero}
F.~Nicola, J.~L. Romero, and S.~I. Trapasso.
\newblock On the existence of optimizers for time-frequency concentration
  problems.
\newblock {\em Calc. Var. Partial Differential Equations}, 62(1):Paper No. 21,
  21, 2023.

\bibitem{nicola22}
F.~Nicola and P.~Tilli.
\newblock The {F}aber-{K}rahn inequality for the short-time {F}ourier
  transform.
\newblock {\em Invent. Math.}, 230(1):1--30, 2022.

\bibitem{pinos}
A.~D. Pinos and N.~Lev.
\newblock Gabor orthonormal bases, tiling and periodicity.
\newblock {\em Math. Ann.}, 384(3-4):1461--1467, 2022.

\bibitem{joao_tilli}
J.~Ramos and P.~Tilli.
\newblock A {F}aber-{K}rahn inequality for wavelet transforms, Bull. Lond.
  Math. Soc., to appear.

\bibitem{reiter89}
H.~Reiter.
\newblock {\em Metaplectic groups and {S}egal algebras}, volume 1382 of {\em
  Lecture Notes in Mathematics}.
\newblock Springer-Verlag, Berlin, 1989.

\bibitem{reiter00}
H.~Reiter and J.~D. Stegeman.
\newblock {\em Classical harmonic analysis and locally compact groups},
  volume~22 of {\em London Mathematical Society Monographs. New Series}.
\newblock The Clarendon Press, Oxford University Press, New York, second
  edition, 2000.

\bibitem{smith}
K.~T. Smith.
\newblock The uncertainty principle on groups.
\newblock {\em SIAM J. Appl. Math.}, 50(3):876--882, 1990.

\bibitem{tao}
T.~Tao.
\newblock An uncertainty principle for cyclic groups of prime order.
\newblock {\em Math. Res. Lett.}, 12(1):121--127, 2005.

\bibitem{weil64}
A.~Weil.
\newblock Sur certains groupes d'op\'{e}rateurs unitaires.
\newblock {\em Acta Math.}, 111:143--211, 1964.

\bibitem{wigderson}
A.~Wigderson and Y.~Wigderson.
\newblock The uncertainty principle: variations on a theme.
\newblock {\em Bull. Amer. Math. Soc. (N.S.)}, 58(2):225--261, 2021.

\bibitem{zhou}
W.~Zhou.
\newblock On the construction of discrete orthonormal {G}abor bases on finite
  dimensional spaces.
\newblock {\em Appl. Comput. Harmon. Anal.}, 55:270--281, 2021.

\end{thebibliography}

\end{document}